\newtheoremstyle{theorems}{1em}{1em}{}{}{\bfseries}{}{ }{}
\theoremstyle{theorems}
\newtheorem{theorem}{Theorem}[section]
\newtheorem{definition}{Definition}[section]
\newtheorem{lemma}{Lemma}[section]
\newtheorem{remark}{Remark}[section]
\newtheorem{principle}{Principle}[section]
\newtheorem{corollary}{Corollary}[section]
\definecolor{lightblue}{rgb}{0.8,0.8,1}
\newcommand{\R}{\mathbb{R}}
\newcommand{\M}{\mathcal{M}}
\newcommand{\mc}{\mathcal}
\newcommand{\lto}{\longrightarrow}
\newcommand{\lmto}{\longmapsto}
\newcommand{\ls}{\leqslant}
\newcommand{\gs}{\geqslant}
\newcommand{\tr}{\mathrm{tr}}
\newcommand{\inv}{\mathrm{inv}}
\newcommand{\pow}{\mathrm{pow}}
\newcommand{\diag}{\mathrm{diag}}
\newcommand{\Diag}{\mathrm{Diag}}
\newcommand{\adj}{\mathrm{adj}}
\newcommand{\Id}{\mathrm{Id}}
\newcommand{\Sym}{\mathrm{Sym}}
\newcommand{\Orth}{\mathrm{O}}
\newcommand{\SPD}{\mathrm{SPD}}
\newcommand{\Log}{\mathrm{Log}}
\newcommand{\dotprod}[2]{\langle #1|#2\rangle}
\newcommand{\Frob}{\mathrm{Frob}}
\newcommand{\ME}{\mathrm{ME}}
\newcommand{\MPE}{\mathrm{MPE}}
\newcommand{\BKM}{\mathrm{BKM}}
\newcommand{\sys}[4]{
\left\{
\begin{array}{ccc}
#1 & \mathrm{if} & #2\\ \relax
#3 & \mathrm{if} & #4\\
\end{array}
\right\}
}
\newcommand{\syst}[8]{
\left\{
\begin{array}{ccc}
#1 & \mathrm{if} & #2\\ \relax
#3 & \mathrm{if} & #4\\ \relax
#5 & \mathrm{if} & #6\\ \relax
#7 & \mathrm{if} & #8\\
\end{array}
\right\}
}
\journal{Differential Geometry and its Applications}
\begin{document}

\begin{frontmatter}

\title{The geometry of mixed-Euclidean metrics\\on symmetric positive definite matrices}
\author{Yann Thanwerdas\corref{cor1}}
\ead{yann.thanwerdas@inria.fr}
\author{Xavier Pennec}
\ead{xavier.pennec@inria.fr}

\cortext[cor1]{Corresponding author}

\address{Université Côte d'Azur and Inria, Epione Project Team, Sophia Antipolis\\2004 route des Lucioles, 06902 Valbonne Cedex, France
\vspace{-10mm}
}

\begin{abstract}
Several Riemannian metrics and families of Riemannian metrics were defined on the manifold of Symmetric Positive Definite (SPD) matrices. Firstly, we formalize a common general process to define families of metrics: the principle of deformed metrics. We relate the recently introduced family of alpha-Procrustes metrics to the general class of mean kernel metrics by providing a sufficient condition under which elements of the former belongs to the latter. 
Secondly, we focus on the principle of balanced bilinear forms that we recently introduced. We give a new sufficient condition under which the balanced bilinear form is a metric. It allows us to introduce the Mixed-Euclidean (ME) metrics which generalize the Mixed-Power-Euclidean (MPE) metrics. We unveal their link with the $(u,v)$-divergences and the $(\alpha,\beta)$-divergences of information geometry and we provide an explicit formula of the Riemann curvature tensor. We show that the sectional curvature of all ME metrics can take negative values and we show experimentally that the sectional curvature of all MPE metrics but the log-Euclidean, power-Euclidean and power-affine metrics can take positive values.
\end{abstract}

\begin{keyword}
Symmetric Positive Definite matrices, Riemannian geometry, information geometry, families of metrics, kernel metrics, alpha-Procrustes, mixed-power-Euclidean, mixed-Euclidean, $(u,v)$-divergence, $(\alpha,\beta)$-divergence \MSC[2020]{15B48, 53B12, 15A63, 53B20}
\end{keyword}

\end{frontmatter}


\section{Introduction}

The convex cone of Symmetric Positive Definite (SPD) matrices is a manifold on which several Riemannian metrics were defined: Euclidean, Fisher-Rao/affine-invariant \cite{Skovgaard84,Amari00,Moakher05,Pennec06,Lenglet06-JMIV,Fletcher07}, log-Euclidean \cite{Arsigny06}, Bures-Wasserstein \cite{Dowson82,Olkin82,Dryden09,Takatsu10,Takatsu11,Bhatia19}, Bogoliubov-Kubo-Mori \cite{Petz93,Michor00}, log-Cholesky \cite{Lin19}... Several families of metrics encompassing them were defined to understand their common properties, their differences and the level of generality of each property: kernel metrics and mean kernel metrics \cite{Hiai09,Hiai12}, power-Euclidean \cite{Dryden10}, alpha-Procrustes \cite{HaQuang19}, deformed-affine \cite{Thanwerdas19-affine}, mixed-power-Euclidean \cite{Thanwerdas19-balanced}, extended kernel metrics, bivariate separable metrics \cite{Thanwerdas21-LAA}... In particular, kernel metrics form a very general family of $\Orth(n)$-invariant metrics indexed by kernel maps $\phi:(0,\infty)^2\lto(0,\infty)$ acting on the eigenvalues of SPD matrices. This family contains many $\Orth(n)$-invariant metrics and it has good stability properties. The subclass of mean kernel metrics, for which the kernel maps have monotonicity properties, is interesting because it provides a necessary and sufficient condition for geodesic completeness. Hence, kernel metrics and mean kernel metrics appear as sufficiently general families with interesting properties so it is a natural framework to work in. However, this class contains metrics with very different geometries so it motivates us to define subfamilies of metrics which share more geometric properties with one another.

In previous works, we introduced two principles for building families of Riemannian metrics that share interesting properties: the principle of deformed metrics \cite{Thanwerdas19-affine} and the principle of balanced bilinear forms \cite{Thanwerdas19-balanced}. Deforming metrics (or datasets of SPD matrices) via a diffeomorphism is a very common procedure to define families of metrics. In particular, kernel metrics are stable by univariate diffeomorphisms, those which are characterized by their action on eigenvalues. However, mean kernel metrics are not stable by all univariate diffeomorphisms because of the monotonicity requirement. In this work, we gather many constructions of deformed metrics and we contribute a sufficient condition under which alpha-Procrustes metrics are mean kernel metrics.

The balanced bilinear form of two flat metrics is defined by composing the Frobenius inner product with the parallel transport of each flat metric \cite{Thanwerdas19-balanced}. When the bilinear form is a metric, it forms a dually-flat manifold along with the two flat Levi-Civita connections of the flat metrics. In the case where the two flat metrics are power-Euclidean metrics, the balanced bilinear form is a metric called the mixed-power-Euclidean metric. In this work, we give a new sufficient condition for a balanced bilinear form to be a metric, namely that the flat metrics are univariately-deformed-Euclidean metrics, which allows to define the new family of Mixed-Euclidean metrics. Then, we provide the geometric operations of Mixed-Euclidean metrics regarding information geometry and Riemannian geometry. In particular, our main contributions are on the one hand the link we establish between Mixed-Euclidean/Mixed-Power-Euclidean metrics and the $(u,v)/(\alpha,\beta)$-divergences of information geometry, and on the other hand the expression of the Riemann curvature tensor of Mixed-Euclidean metrics.

In Section 2, we present our notations and the preliminary concepts of univariate maps and kernel metrics. In Section 3, we study deformed metrics and we relate the family of alpha-Procrustes metrics to the class of mean kernel metrics. In Section 4, we recall the main concepts of information geometry, we state the principle of balanced bilinear forms and we explain the relation between the two. In Section 5, we introduce the new family of Mixed-Euclidean metrics and we study its geometry. We conclude and discuss some perspectives in Section 6. The proofs of the results are presented in appendix.

\section{Notations and preliminary concepts}

In this section, we introduce some notations and we recall two concepts that are used throughout the paper. The first one is the concept of univariate map on SPD matrices: it is a map acting on the eigenvalues, such as the symmetric matrix logarithm or the power maps. The successive differentials of smooth univariate maps can be expressed in closed form modulo eigenvalue decomposition thanks to the functions called divided differences \cite{Bhatia97}. This main advantage explains why they are ubiquitous as indexing collections of families of metrics. Secondly, we recall the main facts about classes of kernel and mean kernel metrics introduced in \cite{Hiai09,Hiai12}.

\subsection{Notations}

We denote $\Sym(n)$ the vector space of real symmetric matrices of size $n$, $\SPD(n)$ the manifold of SPD matrices, $\Orth(n)$ the orthogonal group, $\Diag^+(n)$ the group of positive diagonal matrices.

On the manifold $\SPD(n)$, we denote $T_\Sigma\SPD(n)$ the tangent space at $\Sigma\in\SPD(n)$. Given a metric $g^{\mathrm{I}}$ on the manifold $\SPD(n)$ where $\mathrm{I}$ is any index characterizing the metric, we denote $\nabla^{\mathrm{I}}$ its Levi-Civita connection, $R^{\mathrm{I}}$ the Riemann curvature tensor, $T^{\mathrm{I}}$ the torsion tensor, $\Pi^{\mathrm{I}}$ the parallel transport. We omit the index when the context is clear.

Given a matrix $M$, we denote $M_{ij}$ or $[M]_{ij}$ the $(i,j)$-th coefficient of $M$. Given coefficients $(M_{ij})_{1\ls i,j\ls n}\in\R^{n^2}$, we denote $[M_{ij}]_{i,j}$ the matrix with $(i,j)$-th entry $M_{ij}$. Given $(d_1,...,d_n)\in\R^n$, we denote $\diag(d_1,...,d_n)$ the corresponding diagonal matrix.

We recall that $\exp:\Sigma\in\Sym(n)\lmto\sum_{k=0}^{+\infty}{\frac{1}{k!}\Sigma^k}\in\SPD(n)$ is a diffeomorphism whose inverse is the symmetric matrix logarithm denoted $\log:\SPD(n)\lto\Sym(n)$.

\subsection{Univariate maps}

In this paper, we call $\Orth(n)$-equivariant map a map $f:\SPD(n)\lto\Sym(n)$ such that $f(R\Sigma R^\top)=R\,f(\Sigma)R^\top$ for all $\Sigma\in\SPD(n)$ and $R\in\Orth(n)$. Among $\Orth(n)$-equivariant maps, we focus on the class of univariate maps.

\begin{definition}[Univariate maps]
A univariate map is an $\Orth(n)$-equivariant map $f:\SPD(n)\lto\Sym(n)$ such that there exists a map on positive real numbers also denoted $f:(0,\infty)\lto\R$ such that $f(PDP^\top)=P\,\Diag(f(d_1),...,f(d_n))\,P^\top$ for all $P\in\Orth(n)$ and $D\in\Diag^+(n)$ with $D=\Diag(d_1,...,d_n)$.
\end{definition}

Any $f:(0,\infty)\lto\R$ can be extended into a univariate map and if the former is of class $\mc{C}^1$ (resp. $\mc{C}^2$, resp. a $\mc{C}^1$-diffeomorphism), then the latter is differentiable (resp. two times differentiable, resp. a diffeomorphism) \cite{Bhatia97,Thanwerdas21-LAA}. We denote $\mc{U}niv$ the set of smooth univariate diffeomorphisms. In addition, the differential and the Hessian of a smooth univariate map can be expressed thanks to the first and second divided differences as follows.

\begin{definition}[Divided differences]\label{def_first_divided_difference}\cite{Bhatia97}
\begin{enumerate}
    \item Let $f\in\mc{C}^1(\R,\R)$. The first divided difference of $f$ is the continuous symmetric map $f^{[1]}:\R^2\lto\R$ defined for $x,y\in\R$ by:
    \begin{equation}
        f^{[1]}(x,y)=\sys{\frac{f(x)-f(y)}{x-y}}{x\ne y}{f'(x)}{x=y}.
    \end{equation}
    
    \item Let $f\in\mc{C}^2(\R,\R)$. The second divided difference of $f$ is the continuous symmetric map $f^{[2]}:\R^3\lto\R$ defined for $x,y,z\in\R$ by:
    \begin{equation}
        f^{[2]}(x,y,z)=\syst{(f^{[1]}(x,\cdot))^{[1]}(y,z)=\frac{f^{[1]}(x,z)-f^{[1]}(x,y)}{z-y}}{y\ne z}{(f^{[1]}(y,\cdot))^{[1]}(z,x)=\frac{f^{[1]}(y,x)-f^{[1]}(y,z)}{x-z}}{z\ne x}{(f^{[1]}(z,\cdot))^{[1]}(x,y)=\frac{f^{[1]}(z,y)-f^{[1]}(z,x)}{y-x}}{x\ne y}{\frac{1}{2}f''(x)}{x=y=z}.
    \end{equation}
\end{enumerate}
\end{definition}

If $f\in\mc{C}^2(\R,\R)$, then one can check that the differential of $f^{[1]}$ at $(x,y)\in\R^2$ is:
\begin{equation}
    d_{(x,y)}f^{[1]}(h,k)=\sys{\frac{f'(x)h-f'(y)k}{x-y}+\frac{f(x)-f(y)}{(x-y)^2}(h-k)}{x\ne y}{\frac{f''(x)}{2}(h+k)}{x=y},
\end{equation}
so one can prove that $df^{[1]}$ is continuous and $f^{[1]}\in\mc{C}^1(\R^2,\R)$. This also proves that $\frac{\partial f^{[1]}}{\partial x}(x,x)=\frac{f''(x)}{2}$ and that $f^{[2]}$ is continuous.

From now on, all univariate maps are assumed to be smooth.

\begin{lemma}[Differential and Hessian of a univariate map]\label{lemma_diff_univariate}\cite{Bhatia97}
The differential and the Hessian of a univariate map $f$ are $\Orth(n)$-equivariant: $d_{PDP^\top}f(PXP^\top)=P\,d_Df(X)\,P^\top$ and $H_{PDP^\top}f(PXP^\top,PYP^\top)=P\,H_Df(X,Y)\,P^\top$. Hence, they are determined by their values at diagonal matrices $D\in\Diag^+(n)$, which are given by the following formulae:
\begin{align}
    [d_Df(X)]_{ij}&=f^{[1]}(d_i,d_j)X_{ij},\\
    [H_Df(X,X)]_{ij}&=2\sum_{k=1}^n{f^{[2]}(d_i,d_j,d_k)X_{ik}X_{jk}}.
\end{align}
\end{lemma}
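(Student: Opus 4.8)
The plan is to reduce everything to diagonal matrices using equivariance, to prove the two formulae first for monomials $\pow_k(x)=x^k$ where the computation is completely explicit, and then to pass to arbitrary smooth $f$ by a density argument.

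\emph{Reduction by equivariance.} Fix $R\in\Orth(n)$. The defining identity $f(R\Sigma R^\top)=R\,f(\Sigma)R^\top$ is an equality of two smooth maps $\SPD(n)\lto\Sym(n)$. The inner map $\Sigma\mapsto R\Sigma R^\top$ is linear with differential $X\mapsto RXR^\top$, so differentiating the left-hand side at $\Sigma$ in the direction $X$ gives $d_{R\Sigma R^\top}f(RXR^\top)$, while differentiating the right-hand side gives $R\,d_\Sigma f(X)\,R^\top$; this is the equivariance of $df$. Since $\SPD(n)$ is an open subset of the vector space $\Sym(n)$, the Hessian is just the ordinary ambient second derivative $H_\Sigma f(X,Y)=\partial_s\partial_t|_{0}\,f(\Sigma+sX+tY)$, and differentiating once more in the direction $Y$ yields the equivariance of $Hf$ in the same way. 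Writing $\Sigma=PDP^\top$ with $P\in\Orth(n)$ and $D=\Diag(d_1,\dots,d_n)\in\Diag^+(n)$, equivariance reduces the computation of $d_\Sigma f$ and $H_\Sigma f$ to that of $d_D f$ and $H_D f$, so it suffices to establish the two formulae at a diagonal matrix $D$.

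\emph{The differential at a diagonal matrix.} For $f=\pow_k$, a direct computation gives $d_D(\Sigma\mapsto\Sigma^k)(X)=\sum_{a+b=k-1}D^aXD^b$, whose $(i,j)$ entry is $\big(\sum_{a+b=k-1}d_i^a d_j^b\big)X_{ij}$; and $\sum_{a+b=k-1}d_i^a d_j^b$ equals $(d_i^k-d_j^k)/(d_i-d_j)$ when $d_i\ne d_j$ and $k\,d_i^{k-1}$ when $d_i=d_j$, i.e. exactly $\pow_k^{[1]}(d_i,d_j)$, matching the piecewise Definition \ref{def_first_divided_difference} in all cases. By linearity, $[d_Df(X)]_{ij}=f^{[1]}(d_i,d_j)X_{ij}$ holds for every polynomial $f$. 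Both sides are continuous in $(f,f')$ for uniform convergence on a compact neighbourhood of $\{d_1,\dots,d_n\}$ — the right-hand side because $f\mapsto f^{[1]}$ is continuous for this topology, by the explicit formula in Definition \ref{def_first_divided_difference} — and polynomials are dense in $\mc{C}^1$ on compacts, so the formula extends to all smooth $f$.

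\emph{The Hessian at a diagonal matrix.} Since the Hessian is the ambient second derivative, $H_Df(X,Y)=d_D\big(\Sigma\mapsto d_\Sigma f(X)\big)(Y)$, so we differentiate the differential formula once more. Testing again on $f=\pow_k$: starting from $d_\Sigma(\Sigma^k)(X)=\sum_{a+b=k-1}\Sigma^aX\Sigma^b$, differentiating in the direction $Y$ and evaluating the $(i,j)$ entry at $D$ produces $\sum_{m}\big(\sum_{a+b+c=k-2}d_i^a d_j^b d_m^c\big)(X_{im}Y_{jm}+Y_{im}X_{jm})$, where the inner sum $\sum_{a+b+c=k-2}d_i^a d_j^b d_m^c$ is exactly $\pow_k^{[2]}(d_i,d_j,d_m)$ (the standard identity that the complete homogeneous symmetric polynomial of degree $k-2$ is the second divided difference of $t\mapsto t^k$, which again reproduces the piecewise Definition \ref{def_first_divided_difference} at all coincidences). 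By linearity, $[H_Df(X,Y)]_{ij}=\sum_m f^{[2]}(d_i,d_j,d_m)(X_{im}Y_{jm}+Y_{im}X_{jm})$ for polynomials, hence for all smooth $f$ by the same density argument, now using continuity of $f\mapsto f^{[2]}$, equivalently of $f\mapsto(f',f'')$, recalled just before the statement. Specializing $Y=X$ and using the symmetry of $X$ gives $[H_Df(X,X)]_{ij}=2\sum_{k=1}^n f^{[2]}(d_i,d_j,d_k)X_{ik}X_{jk}$. I expect the one genuinely delicate point to be the bookkeeping for the Hessian together with the verification that the monomial identities reproduce the \emph{piecewise} definitions of $f^{[1]}$ and $f^{[2]}$ at coinciding eigenvalues; these coincidences are precisely what a naive first/second-order eigenvector perturbation would mishandle (it degenerates at repeated eigenvalues), which is exactly why the polynomial-plus-density route is taken. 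An operator-theoretic alternative — viewing $d_\Sigma f$ as $f^{[1]}$ of the commuting left- and right-multiplication operators by $\Sigma$, and $H_\Sigma f$ as $f^{[2]}$ of three such operators — gives the same formulae and could replace the density step.
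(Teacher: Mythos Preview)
The paper does not actually prove this lemma: it is quoted from \cite{Bhatia97} and used as a black box, with no proof given either in the main text or in the appendix. So there is no ``paper's own proof'' to compare against.

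That said, your argument is correct and is one of the standard routes to the Daleckii--Krein formulae. The equivariance step is immediate from differentiating the defining identity. For the monomial computation, your identification of $\sum_{a+b=k-1}d_i^ad_j^b$ with $\pow_k^{[1]}(d_i,d_j)$ and of the complete homogeneous polynomial $\sum_{a+b+c=k-2}d_i^ad_j^bd_m^c$ with $\pow_k^{[2]}(d_i,d_j,d_m)$ is exactly right, including at coincidences. The density step is also fine: polynomials are dense in $\mc{C}^k$ on compact intervals (approximate $f^{(k)}$ uniformly and integrate), and the one point you might want to make explicit is why the \emph{left-hand side} $d_Df(X)$ depends continuously on $f$ in $\mc{C}^1$. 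The cleanest way is the standard real-analysis lemma: once you know $p_n(\Sigma)\to f(\Sigma)$ locally uniformly in $\Sigma$ (spectral calculus) and $d_\Sigma p_n(X)$ converges locally uniformly in $\Sigma$ (which you get from your formula together with equivariance and the uniform convergence $p_n^{[1]}\to f^{[1]}$), the limit of the derivatives is the derivative of the limit. The same remark applies one order up for the Hessian. Your closing comment about the operator-theoretic alternative (functional calculus in the commuting left/right multiplications) is indeed the other classical proof and is closer in spirit to how Bhatia presents it.
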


\subsection{Classes of $\Orth(n)$-invariant metrics}

The class of kernel metrics is a subclass of $\Orth(n)$-invariant metrics on SPD matrices indexed by smooth bivariate symmetric maps $\phi:(0,\infty)^2\lto(0,\infty)$ \cite{Hiai09}. The advantages of this class are the simple formulation of its elements, some important results on the metrics (completeness, cometric) and some important stability properties of the class. Hence, it is a good ambient class to define subfamilies of metrics. Therefore in this section, we recall the definition of kernel metrics, the refinement of mean kernel metrics and the results on completeness, cometric and stability.

\subsubsection{Kernel metrics}

\begin{definition}[Kernel metric]
A \textit{kernel metric} \cite{Hiai09} is an $\Orth(n)$-invariant metric for which there is a smooth bivariate map $\phi:(0,\infty)^2\lto(0,\infty)$ such that $g_\Sigma(X,X)=g_D(X',X')=\sum_{i,j}{\frac{1}{\phi(d_i,d_j)}X_{ij}'^2}$, where $X=PX'P^\top$, $\Sigma=PDP^\top$ with $P\in\Orth(n)$ and $D=\Diag(d_1,...,d_n)$.
\end{definition}

Important examples of kernel metrics are the Euclidean, the log-Euclidean, the affine-invariant, the Bures-Wasserstein and the Bogoliubov-Kubo-Mori metrics:
\begin{align}
    &(\mathrm{Euclidean}) & g^{\mathrm{E}}_\Sigma(X,X)&=\tr(X^2),\label{eq:euclidean_metric}\\
    &(\mathrm{Log}\text{-}\mathrm{Euclidean}) &g^{\mathrm{LE}}_\Sigma(X,X)&=\tr(d_\Sigma\log(X)^2),\label{eq:log-euclidean_metric}\\
    &(\mathrm{Affine}\text{-}\mathrm{invariant}) &g^{\mathrm{A}}_\Sigma(X,X)&=\tr((\Sigma^{-1}X)^2),\label{eq:affine-invariant_metric}\\
    &(\mathrm{Bures}\text{-}\mathrm{Wasserstein}) &g^{\mathrm{BW}}_\Sigma(X,X)&=\tr(\Sigma\mc{S}_\Sigma(X)^2),\label{eq:bures-wasserstein_metric}\\
    &(\mathrm{Bogoliubov}\text{-}\mathrm{Kubo}\text{-}\mathrm{Mori}) &g^{\mathrm{BKM}}_\Sigma(X,X)&=\tr(d_\Sigma\log(X)X),\label{eq:bkm_metric}
\end{align}
where $\mc{S}_\Sigma(X)$ denotes the solution of the Sylvester equation $X=\Sigma\mc{S}_\Sigma(X)+\mc{S}_\Sigma(X)\Sigma$. A review of the definitions, geometric properties and main references on these five metrics can be found in \cite{Thanwerdas21-LAA}.

\subsubsection{The refinement of mean kernel metrics}

There is a refinement of kernel metrics where the bivariate function $\phi$ relies on a function called a symmetric homogeneous mean \cite{Hiai09}. These subclasses provide a nice necessary and sufficient condition for geodesic completeness.

\begin{definition}[Mean kernel metrics]\label{def:mean_kernel_metrics}\cite{Hiai09}
A \textit{mean kernel metric} is a kernel metric characterized by a bivariate map $\phi$ of the form $\phi(x,y)=a\,m(x,y)^\theta$ where $a>0$ is a positive coefficient, $\theta\in\R$ is a homogeneity power and $m:(0,\infty)^2\lto(0,\infty)$ is a symmetric homogeneous mean, that is:
\begin{enumerate}
    \itemsep0em
    \item symmetric, i.e. $m(x,y)=m(y,x)$ for all $x,y>0$,
    \item homogeneous, i.e. $m(cx,cy)=c\,m(x,y)$ for all $c,x,y>0$,
    \item non-decreasing in both variables,
    \item $\min(x,y)\ls m(x,y)\ls\max(x,y)$ for all $x,y>0$. It implies $m(x,x)=x$.
\end{enumerate}
\end{definition}

\begin{theorem}[Completeness of mean kernel metrics]\cite{Hiai09}
Mean kernel metrics are geodesically complete if and only if $\theta=2$.
\end{theorem}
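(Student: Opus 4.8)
The plan is to reduce geodesic completeness to metric completeness of $(\SPD(n),d_g)$ via the Hopf--Rinow theorem, and to settle the latter with two ingredients: explicit finite-length curves escaping $\SPD(n)$ when $\theta\neq 2$, and, when $\theta=2$, the standard completeness criterion that a connected Riemannian manifold carrying a $\mc{C}^1$ function $f\gs 0$ which is proper (preimages of compact sets are compact) and whose gradient has bounded $g$-norm is geodesically complete — because along a unit-speed geodesic defined on $[0,b)$ the value of $f$ would stay bounded, keeping the geodesic in a compact sublevel set and hence extendable past $b$.

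\textbf{Necessity.} Consider the diagonal curve $c(t)=\diag(t,1,\dots,1)$, for which $\dot c(t)=\diag(1,0,\dots,0)$ and, since $c(t)$ is already diagonal and $m(t,t)=t$ (property~4 of Definition~\ref{def:mean_kernel_metrics}), $g_{c(t)}(\dot c(t),\dot c(t))=\phi(t,t)^{-1}=a^{-1}t^{-\theta}$. Hence the length of $c$ between two parameters is $a^{-1/2}\int t^{-\theta/2}\,dt$. If $\theta<2$ this integral converges on $(0,1]$, so $(c(1/k))_{k}$ is a $d_g$-Cauchy sequence converging in $\Sym(n)$ to $\diag(0,1,\dots,1)\notin\SPD(n)$; if $\theta>2$ it converges on $[1,\infty)$, so $(c(k))_{k}$ is $d_g$-Cauchy and leaves every compact subset of $\SPD(n)$. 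In either case $(\SPD(n),d_g)$ is incomplete, hence not geodesically complete.

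\textbf{Sufficiency.} Take $f=h^{1/2}$ with $h(\Sigma)=\tr((\log\Sigma)^2)$. Since $h(\exp S)=\tr(S^2)$ for $S\in\Sym(n)$ and $\exp$ is a diffeomorphism, $h^{-1}([0,M])=\exp(\{\tr(S^2)\ls M\})$ is compact, so $f$ is proper. To bound $\|\nabla^g f\|_g$, work at a diagonal $\Sigma=D=\diag(d_1,\dots,d_n)$ by $\Orth(n)$-invariance. By Lemma~\ref{lemma_diff_univariate} applied to $\log$ (with $\log^{[1]}(x,x)=1/x$), $d_Dh(X)=2\tr(\log D\cdot d_D\log(X))=2\sum_i\frac{\log d_i}{d_i}X_{ii}$; solving $g_D(\nabla^g h,X)=d_Dh(X)$ against $g_D(Y,X)=\sum_{i,j}\phi(d_i,d_j)^{-1}Y_{ij}X_{ij}$ gives $(\nabla^g h)_{ij}=0$ for $i\neq j$ and $(\nabla^g h)_{ii}=2\phi(d_i,d_i)\frac{\log d_i}{d_i}$, whence $\|\nabla^g h\|_g^2=4\sum_i\phi(d_i,d_i)\frac{(\log d_i)^2}{d_i^2}$. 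When $\theta=2$ we have $\phi(d_i,d_i)=a\,m(d_i,d_i)^2=a\,d_i^2$, so $\|\nabla^g h\|_g^2=4a\sum_i(\log d_i)^2=4a\,h(D)$, and therefore $\|\nabla^g f\|_g\equiv\sqrt a$ away from $\Sigma=\Id$; at $\Id$ it suffices that $h$ grows at most exponentially along unit-speed geodesics, which follows by Gr\"onwall from $\frac{d}{dt}h(\gamma(t))\ls 2\sqrt{a\,h(\gamma(t))}$. The criterion then yields geodesic completeness.

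\textbf{Main obstacle.} The only real content is the sufficiency, and within it the choice of the proper function: once one takes $h(\Sigma)=\tr((\log\Sigma)^2)$ — the squared log-Euclidean distance to $\Id$, the log-Euclidean metric being itself the mean kernel metric attached to the logarithmic mean with $\theta=2$ — the gradient computation collapses, because it only ever evaluates $\phi$ on the diagonal, where $m(x,x)=x$ forces $\phi(x,x)=a\,x^{\theta}$ and makes the dependence on the mean $m$ disappear. Everything else (Hopf--Rinow, the completeness criterion, properness of $h$, and the Cauchy-sequence bookkeeping in the necessity part) is routine.
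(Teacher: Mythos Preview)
The paper does not supply a proof of this theorem; it is simply quoted from \cite{Hiai09}. So there is no argument in the paper to compare against, and your proof must stand on its own merits.

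It does. For necessity, the radial curve $t\mapsto\diag(t,1,\dots,1)$ has $g$-speed $a^{-1/2}t^{-\theta/2}$ because only $\phi(t,t)=a\,t^\theta$ enters, and the integral of $t^{-\theta/2}$ converges near $0$ when $\theta<2$ and near $\infty$ when $\theta>2$, producing Cauchy sequences that escape $\SPD(n)$. For sufficiency, the choice $h(\Sigma)=\tr((\log\Sigma)^2)$ is exactly right: its gradient at a diagonal point is itself diagonal, so once more only the diagonal values $\phi(d_i,d_i)=a\,d_i^2$ matter, and the identity $\|\nabla^g h\|_g^2=4a\,h$ follows uniformly across all choices of the mean $m$ --- which is the insight you correctly flag as the heart of the matter. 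Two cosmetic remarks: the inequality $|\tfrac{d}{dt}h(\gamma)|\ls 2\sqrt{a\,h(\gamma)}$ gives at most \emph{quadratic} (not exponential) growth of $h$ along unit-speed geodesics, via $\sqrt{h(\gamma(t))}\ls\sqrt{h(\gamma(0))}+\sqrt a\,|t|$; and the failure of $f=\sqrt h$ to be $\mc C^1$ at $I_n$ is indeed harmless, since the extension argument bypasses $f$ and uses the differential inequality on $h$ directly.
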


\subsubsection{Main results}

The five kernel metrics cited above are mean kernel metrics. The mean functions are summarized in Table \ref{tab:mean_kernel_metrics}.
    
\begin{table}[htbp]
    \centering
    \begin{tabular}{|c|c|c|c|c|}
    \hline
    Metric & Kernel $\phi(x,y)$ & Mean $m$ & Power $\theta$\\
    \hline
    Euclidean & $1$ & Any mean & $0$ \\
    Log-Euclidean & $(\frac{x-y}{\log(x)-\log(y)})^2$ & Logarithmic mean & $2$\\
    Affine-invariant & $xy$ & Geometric mean & $2$\\
    Bures-Wasserstein & $4\,\frac{x+y}{2}$ & Arithmetic mean & $1$ \\
    Bogoliubov-Kubo-Mori & $\frac{x-y}{\log(x)-\log(y)}$ & Logarithmic mean & $1$\\
    \hline
    \end{tabular}
    \caption{Bivariate functions of the main $\Orth(n)$-invariant metrics on SPD matrices.}
    \label{tab:mean_kernel_metrics}
\end{table}
    
Moreover, the class of kernel metrics is stable under pullback by univariate diffeomorphisms \cite{Hiai09}. However, the class of \textit{mean} kernel metrics is not stable under univariate diffeomorphisms, essentially because of the third condition of a symmetric homogeneous mean. Indeed, the mean has to be non-decreasing in both variables which is neither a differential nor a Riemannian property.

In addition, the class of kernel metrics is cometric stable \cite{Thanwerdas21-LAA}. Indeed, the cometric is a metric on the cotangent bundle $T^*\SPD(n)\simeq\SPD(n)\times\Sym(n)^*$. Thanks to the Riesz theorem, the Frobenius inner product provides the identification $\Sym(n)^*\simeq\Sym(n)$ so the cometric can be considered as a metric. The cometric of the kernel metric characterized by $\phi$ is the kernel metric characterized by $\frac{1}{\phi}$.

\section{Deformed metrics}

Log-Euclidean metrics on SPD matrices are pullback metrics of Euclidean metrics on the vector space of symmetric matrices via the symmetric matrix logarithm $\log:\SPD(n)\lto\Sym(n)$. This geometric construction of a metric on SPD matrices based on a diffeomorphism $f$ is commonly used to define families of metrics on SPD matrices indexed by automorphisms of SPD matrices. Indeed, even if these metrics are isometric, they do not give the same results in data analyses. It is actually equivalent to compute with the metric $g$ on the transformed dataset $[f(\Sigma_1),...,f(\Sigma_N)]$ or to compute with the pullback metric $f^*g$ on the initial dataset $[\Sigma_1,...,\Sigma_N]$.

In this section, we give examples of situations in the literature where such transformations are applied to the data (section \ref{subsec:use_of_a_deformation_in_the_literature}), then we unify them into our principle of deformed metrics and we give the fundamental Riemannian operations (distance, geodesics, curvature, parallel transport) of the deformed metrics (section \ref{subsec:principle_of_deformed_metrics}). In Section \ref{subsec:deformed_bures-wasserstein}, we contribute the new family of deformed-Wasserstein metrics based on this principle which comprises the family of alpha-Procrustes metrics \cite{HaQuang19}. We also give a sufficient condition under which alpha-Procrustes metrics are mean kernel metrics.

\subsection{Use of a deformation in the literature}\label{subsec:use_of_a_deformation_in_the_literature}

As mentioned before, the class of kernel metrics is stable by pullback under univariate diffeomorphisms \cite{Hiai09,Thanwerdas21-LAA}. In particular, pullbacks of the Euclidean metric and the affine-invariant metric under power diffeomorphisms are detailed in the original paper on kernel metrics \cite{Hiai09}. They were later called power-Euclidean \cite{Dryden10} and power-affine metrics, or more generally deformed-Euclidean and deformed-affine metrics for an arbitrary diffeomorphism \cite{Thanwerdas19-affine}. Moreover, power-Euclidean metrics are \textit{mean} kernel metrics for any power and power-affine metrics are \textit{mean} kernel metrics if and only if the power belongs to $[-2,2]$ \cite{Hiai09}. Since power-Euclidean metrics interpolate between the log-Euclidean, the Wigner-Yanase/square-root and the Euclidean metrics, an optimization procedure was proposed on the parameter to choose the most appropriate metric on a dataset of covariance matrices for Diffusion Tensor Imaging (DTI) \cite{Dryden10}. It is common in DTI to compute with precision matrices which are the inverses of covariance matrices, $\inv(\Sigma)=\Sigma^{-1}$ \cite{Lenglet04}, or with other transformations of the covariance matrices such as the adjugate function $\adj(\Sigma)=\det(\Sigma)\Sigma^{-1}$ \cite{Fuster16}. More recently, the family of alpha-Procrustes metrics was introduced by pullback under power diffeomorphisms of the Bures-Wasserstein metric, as for power-Euclidean and power-affine metrics, and it was extended to the infinite dimension in the context of Reproducing Kernel Hilbert Spaces (RKHS) \cite{HaQuang19}.

In papers where the power diffeomorphisms are used to define power-Euclidean, power-affine and alpha-Procrustes metrics \cite{Hiai09,Dryden10,Hiai12,Thanwerdas19-affine,HaQuang19}, it is often noticed that the limit when the power tends to 0 is the log-Euclidean metric. This is actually a general fact. Indeed, from a Riemannian metric $g$, it is possible to construct a one-parameter family of metrics $(g^{(p)})_{p\in\R^*}$ by taking the pullback by the power diffeomorphism $\pow_p:\Sigma\in\SPD(n)\lmto\Sigma^p\in\SPD(n)$ for $p\ne 0$ and to scale it by $\frac{1}{p^2}$, that is $g^{(p)}_\Sigma(X,X)=\frac{1}{p^2}g_{\Sigma^p}(d_\Sigma\pow_p(X),d_\Sigma\pow_p(X))$ for all $\Sigma\in\SPD(n)$ and all $X\in T_\Sigma\SPD(n)$. Then when $p$ tends to 0, $g^{(p)}$ tends to the log-Euclidean metric associated to the inner product $g_{I_n}$, that is $g^{(p)}_\Sigma(X,X)\underset{p\to 0}{\lto}g_{I_n}(d_\Sigma\log(X),d_\Sigma\log(X))$ for all $\Sigma\in\SPD(n)$ and all $X\in T_\Sigma\SPD(n)$.

\subsection{Principle of deformed metrics}\label{subsec:principle_of_deformed_metrics}

\begin{principle}[Principle of deformed metrics]
Let $g$ be a Riemannian metric on $\SPD(n)$ and $f:\SPD(n)\lto\SPD(n)$ be a diffeomorphism. Then the $f$-deformed metric is defined as the pullback metric $f^*g$. It is a Riemannian metric on $\SPD(n)$ which is isometric to $g$ and whose expression is:
\begin{equation}
    (f^*g)_\Sigma(X,X)=g_{f(\Sigma)}(d_\Sigma f(X),d_\Sigma f(X)).
\end{equation}
\end{principle}

All the Riemannian operations of a deformed metric are obtained by pulling back the formulae that are known for the initial metric, as shown in Table \ref{tab:riemannian_operations_deformed}.

\begin{table}[htbp]
\centering
\begin{tabular}{|c|c|}
    \hline
    Metric & $g^f_\Sigma(X,X)=g_{f(\Sigma)}(d_\Sigma f(X),d_\Sigma f(X))$ \\
    \hline
    Distance & $d^f(\Sigma,\Lambda)=d(f(\Sigma),f(\Lambda))$\\
    \hline
    Levi-Civita & $d_\Sigma f(\nabla^f_{X_\Sigma}Y)=\nabla_{d_\Sigma f(X)}(df(Y))$\\
    \hline
    Curvature & $d_\Sigma f(R^f_\Sigma(X,Y)Z)=R_{f(\Sigma)}(d_\Sigma f(X),d_\Sigma f(Y))d_\Sigma f(Z)$\\
    \hline
    Geodesics & $f(\gamma^f_{(\Sigma,X)}(t))=\gamma_{(f(\Sigma),d_\Sigma f(X))}(t)$\\
    \hline
    Logarithm & $d_\Sigma f(\Log^f_\Sigma(\Lambda))=\Log_{f(\Sigma)}(f(\Lambda))$\\
    \hline
    Parallel transport & $d_\Lambda f(\Pi^f_{\gamma;\Sigma\to\Lambda}X)=\Pi_{f\circ\gamma;f(\Sigma)\to f(\Lambda)}(d_\Sigma f(X))$\\
    \hline
\end{tabular}
\caption{Riemannian operations of deformed metrics on SPD matrices}
\label{tab:riemannian_operations_deformed}
\end{table}

A fundamental stability property is that if $g$ is $\Orth(n)$-invariant and if $f$ is $\Orth(n)$-equivariant, then the deformed metric $f^*g$ is also $\Orth(n)$-invariant. Moreover, as mentioned before, if $g$ is a kernel metric and if $f$ is univariate, then the deformed metric $f^*g$ is a kernel metric and the set $\{f^*g,f\in\mc{U}niv\}$ forms a family of kernel metrics that is closed under pullback by univariate diffeomorphisms \cite{Hiai09}. The Riemannian operations that are known in closed form for $g$ are also known in closed form for $f^*g$.

\subsection{The new family of deformed-Wasserstein metrics}\label{subsec:deformed_bures-wasserstein}

\begin{definition}[Deformed-Wasserstein metrics]
A deformed-Wasserstein metric is the pullback metric by a univariate diffeomorphism of the Bures-Wasserstein metric (Formula (\ref{eq:bures-wasserstein_metric})).
\end{definition}

The family of deformed-Wasserstein metrics contains the family of alpha-Procrustes metrics since they are pullbacks of the Bures-Wasserstein metric by the power diffeomorphism $\pow_{2\alpha}$ scaled by $\frac{1}{4\alpha^2}$ \cite{HaQuang19}. In this work, we designate alpha-Procrustes metrics as power-Wasserstein metrics to be consistent with power-Euclidean and power-affine metrics and to parameterize the family by $p\in\R^*$, the correspondence being $p=2\alpha$. As argued earlier, we can say that the log-Euclidean metric belongs to deformed-Wasserstein metrics so we can designate it as power-Wasserstein with power $p=0$.

Since the Bures-Wasserstein metric is a mean kernel metric, it is tempting to determine when a power-Wasserstein metric is a mean kernel metric, in analogy to the work done for the power-Euclidean and the power-affine metrics \cite{Hiai09}. Here we give a sufficient condition under which a power-Wasserstein metric is a mean kernel metric. The proof is in Appendix \ref{app}.

\begin{theorem}[Sufficient condition for power-Wasserstein to be mean kernel]\label{thm:power-wasserstein}
The power-Wasserstein metric of parameter $p\ls 1$ is a mean kernel metric.
\end{theorem}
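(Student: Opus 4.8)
The plan is to compute the kernel function $\phi$ of the power-Wasserstein metric of parameter $p$ explicitly, write it in the form $a\,m(x,y)^\theta$, and then verify the four axioms of a symmetric homogeneous mean for $m$ under the hypothesis $p\ls 1$. First I would recall that the power-Wasserstein metric is $g^{(p)}_\Sigma(X,X)=\frac{1}{p^2}g^{\BW}_{\Sigma^p}(d_\Sigma\pow_p(X),d_\Sigma\pow_p(X))$, and that the Bures-Wasserstein metric is the mean kernel metric with kernel $\phi^{\BW}(x,y)=2(x+y)$ (arithmetic mean, power $\theta=1$, coefficient $a=4$). Using Lemma \ref{lemma_diff_univariate}, the differential of $\pow_p$ at a diagonal $D=\Diag(d_1,\dots,d_n)$ acts on the $(i,j)$ entry by multiplication by the divided difference $\pow_p^{[1]}(d_i,d_j)$. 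Combining these two facts, a diagonalization computation gives that $g^{(p)}$ is a kernel metric with
\begin{equation}
\phi^{(p)}(x,y)=\frac{p^2}{2}\cdot\frac{x^p+y^p}{\left(\pow_p^{[1]}(x,y)\right)^2}=\frac{p^2}{2}\cdot\frac{(x^p+y^p)(x-y)^2}{(x^p-y^p)^2}
\end{equation}
for $x\ne y$, extended by continuity. This is the Wigner–Yanase–Dyson-type expression one expects; when $p\to 0$ it degenerates to the logarithmic-mean kernel of the log-Euclidean metric, consistent with the convention $p=0$.

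Next I would isolate the candidate mean. Write $\phi^{(p)}(x,y)=a\,m_p(x,y)$ with $\theta=1$ and, say, $a=p^2/2$, so that
\begin{equation}
m_p(x,y)=\frac{(x^p+y^p)(x-y)^2}{(x^p-y^p)^2},\qquad m_p(x,x)=x.
\end{equation}
Symmetry is immediate, and homogeneity of degree $1$ is a direct scaling check: replacing $(x,y)$ by $(cx,cy)$ multiplies numerator and denominator by $c^{p}\cdot c^2$ and $c^{2p}$ respectively, leaving an overall factor $c$. The axiom $\min(x,y)\ls m_p(x,y)\ls\max(x,y)$ should follow once monotonicity is established, since $m_p(x,x)=x$ forces the value to be sandwiched between the two diagonal values when $m_p$ is non-decreasing in each argument; alternatively it can be checked directly by reducing, via homogeneity, to the one-variable inequality $t\ls \frac{(t^p+1)(t-1)^2}{(t^p-1)^2}\ls 1$ for $t\in(0,1)$ (and the reverse for $t>1$), which is an elementary estimate.

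The main obstacle is the monotonicity axiom: showing that $m_p$ is non-decreasing in each variable precisely when $p\ls 1$. By symmetry it suffices to treat $\partial m_p/\partial y\gs 0$, and by homogeneity one can set $x=1$ and study the single-variable function $h(y)=\frac{(1+y^p)(1-y)^2}{(1-y^p)^2}$ on $(0,\infty)$, aiming to prove $h'(y)\gs 0$ for all $y>0$ iff $p\ls 1$. I expect the cleanest route is the substitution $y=e^{2s}$ (or $y=e^{s}$), which turns the power differences $y^p-1$ into hyperbolic sines: $\frac{y^p-1}{y^{p/2}}=2\sinh(ps)$ and $\frac{y-1}{y^{1/2}}=2\sinh(s)$, so that after pulling out the right power of $y$ the function becomes proportional to $\frac{\cosh(ps)}{\sinh^2(ps)}\sinh^2(s)$ up to a $\cosh$ factor from $1+y^p$; differentiating in $s$ and clearing positive denominators reduces the monotonicity statement to a trigonometric/hyperbolic inequality of the shape "$\sinh$ and $\cosh$ of $s$ versus $ps$", of the type $s\coth s$ is increasing and convexity-type comparisons, which hold for $|p|\ls 1$ and fail for $p>1$ (where one can exhibit an explicit $s$ with $h'(y)<0$). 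For the necessity direction I would simply note that for $p>1$ a local computation near a well-chosen point, or the behaviour as $y\to\infty$, violates condition (3) or (4), so that $p\ls 1$ is exactly the regime where all four axioms hold; this also matches the known threshold $[-2,2]$ for power-affine, the Wasserstein case being more restrictive on the positive side. Finally I would assemble: $\phi^{(p)}=a\,m_p^{\theta}$ with $a>0$, $\theta=2\cdot 0+1$ wait—$\theta=1$, and $m_p$ a symmetric homogeneous mean, hence $g^{(p)}$ is a mean kernel metric, with the case $p=0$ handled separately by the log-Euclidean limit (logarithmic mean, $\theta=2$).
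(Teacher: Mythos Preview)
Your computation of the kernel $\phi^{(p)}$ is essentially correct (up to an irrelevant constant factor), and the overall strategy --- write $\phi^{(p)}=a\,m^{\theta}$ and verify the mean axioms --- is the paper's strategy too. But your choice $\theta=1$, $m_p(x,y)=\dfrac{(x^p+y^p)(x-y)^2}{(x^p-y^p)^2}$ is fatal: this $m_p$ is \emph{not} homogeneous of degree $1$. Your own scaling check gives a factor $c^{\,p+2}/c^{\,2p}=c^{\,2-p}$, not $c$. Relatedly, $m_p(x,x)=\dfrac{2}{p^2}\,x^{2-p}$ (from $x^p-y^p\sim p\,x^{p-1}(x-y)$), not $x$, so axiom (4) cannot hold either. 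As written, your $m_p$ is never a symmetric homogeneous mean for $p\ne 1$, and the conclusion does not follow.

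The fix is to take $\theta=2-p$ and $m(x,y)=\Bigl[(x^p+y^p)\Bigl(p\,\dfrac{x-y}{x^p-y^p}\Bigr)^{2}\Bigr]^{1/(2-p)}$, which is exactly what the paper does. For $p\ls 1$ we have $2-p>0$, so monotonicity of $m$ is equivalent to monotonicity of $m_p$, and your hyperbolic-substitution idea could in principle be carried out on $m_p$; the paper instead reduces to the single-variable function $F(t)=(t^p+1)\bigl(p\,\tfrac{t-1}{t^p-1}\bigr)^2$ and proves $F$ non-decreasing by a direct (and somewhat delicate) case analysis: $p\in(0,1]$ is handled as a product of non-decreasing positive factors, while $p<0$ requires separate arguments on $(0,1)$ and $(1,\infty)$ with second-derivative sign computations. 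Your sketch does not yet supply this level of detail. Finally, note that the theorem is stated as a \emph{sufficient} condition only; the paper's proof does show failure for $p\in(1,2]$, but also conjectures that the mean-kernel property is recovered for $p\gs p_0\approx 2.61$, so the ``iff $p\ls 1$'' you are aiming for is not what is to be proved (and is likely false).
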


This condition does not seem to be sufficient. Indeed, after numerical simulations, we conjecture that there exists $p_0\in(2.61,2.611)$ such that the power-Wasserstein metric of parameter $p$ is a mean kernel metric if and only if $p\in(-\infty,1]\cup[p_0,+\infty)$. Moreover, the proof actually tells that if $p\in(1,2]$, then it is not a mean kernel metric.

In this section, we gathered the deformations of Riemannian metrics or of SPD datasets under our principle of deformed metrics. Therefore from a metric we can define the family of power deformations of this metric, which tends to a log-Euclidean metric when the power tends to 0. Moreover, the family of univariate deformations of a kernel metric is a stable subfamily of kernel metrics and it is interesting to determine when these metrics are mean kernel metrics. It seems to be a quite difficult problem for general univariate deformations. On the example of power deformations of the Bures-Wasserstein metric, we gave the sufficient condition $p\ls 1$. To the best of our knowledge, determining necessary and sufficient conditions for deformed-Euclidean, deformed-affine and deformed-Wasserstein metrics to be mean kernel metrics remains an open problem.

\section{Balanced metrics}

The affine-invariant and the Bogoliubov-Kubo-Mori metrics were shown to provide a dually-flat structure, that is a couple of flat affine connections which are dual with respect to the metric. This is a rich geometric structure which provides a so called canonical divergence, a potentials and specific algorithms \cite{Amari00,Banerjee05,Nielsen09}. A dually-flat manifold is a Hessian manifold where the potential is defined globally \cite{Shima97,Amari14-DGA}. Inspired by the characterization of the duality based on parallel transport, we introduced a preliminary version of the principle of balanced bilinear forms in \cite{Thanwerdas19-balanced} which allows to define a bilinear form $g^0$ on SPD matrices from two flat Riemannian metrics $g,g^*$ by $g^0_\Sigma(X,Y)=\tr((\Pi_{\Sigma\to I_n} X)(\Pi^*_{\Sigma\to I_n}Y))$ where $\Pi,\Pi^*$ denote the respective parallel transports. The term ``balanced" was chosen because the bilinear form relies half on each of the two flat metrics. We showed that if the two flat metrics are power-Euclidean metrics, then the balanced bilinear form is symmetric and positive definite, i.e. a metric. In this section, we give a weaker condition under which the bilinear form is a metric.

To ease the comprehension of this section, we recall the main concepts of information geometry, especially dually-flat manifolds and related notions, in Section \ref{subsec:information_geometry_and_dually-flat_manifolds}. In Section \ref{subsec:principle_of_balanced_bilinear_forms}, we provide a new condition on the two flat metrics so that the balanced bilinear form is a metric: it is sufficient to assume that the flat metrics are univariately-deformed-Euclidean metrics.

\subsection{Information geometry and dually-flat manifolds}\label{subsec:information_geometry_and_dually-flat_manifolds}

Before introducing the specific concepts of information geometry, we recall the definition of an affine map between manifolds equipped with affine connections and the definition of a flat affine connection. We denote $\partial$ the canonical affine connection on a vector space.

\begin{definition}[Affine map]
Let $\mc{M},\mc{M}'$ be two manifolds with respective affine connections $\nabla,\nabla'$. We say that $f:\mc{M}\lto\mc{M}'$ is an affine map if for all vector fields $X,Y$ on $\mc{M}$, we have $\nabla'_{f_*(X)}f_*(Y)=f_*(\nabla_XY)$.
\end{definition}

\begin{definition}[Flat affine connection]\label{def:flat_affine_connection}
Depending on domains of research and authors, a flat affine connection is a connection such that:
\begin{enumerate}
    \itemsep0em
    \item (Affine geometry) $R=0$ and $T=0$, \label{enum:flat2}
    \item (Information geometry) \cite[Section 1.7]{Amari00} there exists a global chart $f:(\mc{M},\nabla)\lto(\R^{\dim\mc{M}},\partial)$ which is an affine map, i.e. $\mc{M}$ can be seen as an open set of $\R^{\dim\mc{M}}$ via $f$. \label{enum:flat3}
\end{enumerate}
\end{definition}

In the following, we call 1-flat (resp. 2-flat) a connection that is flat according to the sense 1 (resp. 2) of the previous definition. A 2-flat connection is clearly 1-flat. Conversely, a 1-flat connection is locally 2-flat, i.e. each point has a neighborhood $\mc{U}$ such that $\nabla$ is 2-flat on $\mc{U}$. A 1-flat connection is a priori not globally 2-flat because the manifold need not be an open set of $\R^n$ (e.g. the circle $\mathbb{S}^1$). The obstruction is topological.

In this work, the flat metrics we introduce on $\SPD(n)$ (which is an open set of the vector space of symmetric matrices) are actually 2-flat.

\subsubsection{Dual connections with respect to a metric}

\begin{definition}[Dual connections]\cite{Amari00}
Let $(\mc{M},g)$ be a Riemannian manifold, $\nabla^g$ the Levi-Civita connection of $g$ and $\nabla,\nabla^*$ be affine connections on $\mc{M}$. We say that $\nabla^*$ is the \textit{dual connection} of $\nabla$ with respect to $g$ if one of the following equivalent requirements is satisfied:
\begin{enumerate}
    \item $\partial_kg_{ij}=g_{lj}\Gamma_{ki}^l+g_{il}(\Gamma^*)^l_{kj}$ for all $i,j,k\in\{1,...,\dim\mc{M}\}$ in any chart, where $\Gamma_{ij}^k$ and $(\Gamma^*)_{ij}^k$ are the Christoffel symbols of $\nabla$ and $\nabla^*$,
    \item $Z\,g(X,Y)=g(\nabla_ZX,Y)+g(X,\nabla^*_ZY)$ for all vector fields $X,Y,Z$ on $\mc{M}$,
    \item $g(X,Y)=g(\Pi X,\Pi^* Y)$ for all vector fields $X,Y$ on $\mc{M}$.
\end{enumerate}
Hence given a metric $g$, $\nabla$ uniquely determines $\nabla^*$ and $(\nabla^*)^*=\nabla$ so that $\frac{\nabla+\nabla^*}{2}$ is a metric connection. We call $(g,\nabla,\nabla^*)$ a \textit{dualistic structure}.\\
Note that if $\nabla$ and $\nabla^*$ are torsion-free, then $\frac{\nabla+\nabla^*}{2}=\nabla^g$.
\end{definition}

\begin{definition}[Dually-flat manifold]\cite{Amari00}
We say that $(\mc{M},g,\nabla,\nabla^*)$ is a dually-flat manifold (or a Hessian manifold) when $\nabla$ and $\nabla^*$ are dual with respect to the metric $g$ and when $\nabla$ and $\nabla^*$ are flat (in the sense 2 of Definition \ref{def:flat_affine_connection}).
\end{definition}

\subsubsection{Divergence}

\begin{definition}[Divergence]\cite{Amari00}
A \textit{divergence} is a distance-like smooth map $D:\mc{M}\times\mc{M}\lto\R_+$ such that:
\begin{enumerate}
    \item (separation) $D(x,y)=0$ if and only if $x=y$,
    \item (non-degenerate) the symmetric positive semi-definite bilinear form $g^D:z\in\mc{M}\lmto -\partial_x|_{x=z}\partial_y|_{y=z}D$ is positive definite. It is called the \textit{induced Riemannian metric}. We denote $\flat:T\M\lto T^*\M$ and $\#=\flat^{-1}:T^*\M\lto T\M$ the musical isomorphisms associated to the metric $g^D$, defined by $\flat(X)(Y)=g(X,Y)$.
\end{enumerate}
We can also define the \textit{dual divergence} $D^*:(x,y)\lmto D(y,x)$ and the \textit{induced connection} by $\nabla^D_XY:z\in\mc{M}\lmto\sharp(Z\lmto\partial^2_x|_{x=z}\partial_y|_{y=z}D(X,Y,Z))$.
\end{definition}

\begin{lemma}[Dual connections induced by a divergence]\cite{Amari00}
Let $D$ be a divergence on $\mc{M}$. Then the connections $\nabla:=\nabla^D$ and $\nabla^*:=\nabla^{D^*}$ are dual with respect to the induced metric $g^D$: a divergence induces a dualistic structure.
\end{lemma}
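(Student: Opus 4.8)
The plan is to verify, in an arbitrary coordinate chart, condition~1 in the definition of dual connections for $\nabla:=\nabla^D$ and $\nabla^*:=\nabla^{D^*}$ with respect to $g:=g^D$, namely $\partial_kg_{ij}=g_{lj}\Gamma_{ki}^l+g_{il}(\Gamma^*)_{kj}^l$. Since $g^D$ is positive definite (the non-degeneracy axiom of a divergence, so the inverse $g^{ij}$ is available to raise indices), this is equivalent to the identity $\partial_kg_{ij}=\Gamma_{ki,j}+\Gamma^*_{kj,i}$ between Christoffel symbols of the first kind, where $\Gamma_{ab,c}:=g(\nabla_{\partial_a}\partial_b,\partial_c)$ and likewise for $\Gamma^*$. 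As both sides of this identity are tensorial in the chart, proving it for coordinate vector fields in every chart is exactly condition~2 (the coordinate-free duality), hence the lemma.

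To carry this out I would introduce the shorthand $D[\partial_{i_1}\cdots\partial_{i_p}\,;\,\partial_{j_1}\cdots\partial_{j_q}](z):=\bigl(\partial^x_{i_1}\cdots\partial^x_{i_p}\,\partial^y_{j_1}\cdots\partial^y_{j_q}\,D(x,y)\bigr)\big|_{x=y=z}$, the semicolon separating the derivatives acting on the first argument of $D$ from those acting on the second. In the sign convention of \cite{Amari00} the induced metric and induced connections read $g_{ij}=-D[\partial_i\,;\,\partial_j]$, $\Gamma_{ki,j}=-D[\partial_k\partial_i\,;\,\partial_j]$, and, since $D^*(x,y)=D(y,x)$ simply transposes the two arguments, $\Gamma^*_{kj,i}=-D^*[\partial_k\partial_j\,;\,\partial_i]=-D[\partial_i\,;\,\partial_k\partial_j]$. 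Now write $g_{ij}(z)=F(z,z)$ with $F(x,y):=-\partial^x_i\partial^y_j D(x,y)$; differentiating along $\partial_k$ and splitting the chain rule into the piece acting on the first slot of $D$ and the piece acting on the second gives
\[
\partial_kg_{ij}=-D[\partial_k\partial_i\,;\,\partial_j]-D[\partial_i\,;\,\partial_k\partial_j]=\Gamma_{ki,j}+\Gamma^*_{kj,i},
\]
which is precisely the required identity, so $\nabla^D$ and $\nabla^{D^*}$ are dual with respect to $g^D$. To conclude that $(g^D,\nabla^D,\nabla^{D^*})$ is a dualistic structure it remains to record that $g^D$ is a genuine metric: positive-definiteness is the non-degeneracy axiom, and symmetry follows by differentiating the identity $D[\partial_i\,;\,]\equiv 0$ — valid because, by separation, $x\mapsto D(x,z)$ attains its minimum $0$ at $x=z$ — which yields $D[\partial_i\,;\,\partial_j]=D[\partial_j\,;\,\partial_i]$. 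One may add that $\nabla^D$ and $\nabla^{D^*}$ are torsion-free, since $D[\partial_i\partial_j\,;\,\partial_k]$ and $D[\partial_k\,;\,\partial_i\partial_j]$ are symmetric in $i,j$, whence $\tfrac{1}{2}(\nabla^D+\nabla^{D^*})=\nabla^{g^D}$.

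The main obstacle is nothing conceptual but the careful bookkeeping underlying the displayed line: keeping track of which of the two arguments of $D$ each partial derivative acts upon, producing the correct pair of chain-rule terms when one restricts to the diagonal $x=y=z$, and keeping the sign conventions of the induced metric and of the two induced connections mutually consistent. Once that is set up, the proof is a one-line computation that uses nothing beyond the smoothness of $D$ already built into the definition of a divergence.
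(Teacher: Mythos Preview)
Your argument is correct and is exactly the standard proof of this fact; the paper itself does not prove this lemma at all but merely cites it from \cite{Amari00}, so there is no ``paper's own proof'' to compare against. One small caveat: the paper's definition of $\nabla^D$ is written without the minus sign that appears in Amari's convention (you correctly work in Amari's convention, as you note), so if one were to plug the paper's formula for $\nabla^D$ literally into your computation the signs would come out wrong --- but this is a discrepancy in the paper's transcription of the definition, not an error in your proof.
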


In general, there is not a canonical way to define a divergence from a dualistic structure, except if it is dually-flat.

\subsubsection{Canonical divergence of a dually-flat manifold}\label{subsubsec:canonical_divergence}

\begin{definition}[Canonical divergence]\label{def:canonical_divergence}\cite{Amari00}
Let $(\mc{M},g,\nabla,\nabla^*)$ be a dually-flat manifold where $\mc{M}$ is simply connected. Let $u,v:\M\lto\R^n$ be two smooth coordinate systems such that $u$ is $\nabla$-affine, $v$ is $\nabla^*$-affine and $g(\frac{\partial}{\partial u^i},\frac{\partial}{\partial v^j})=\delta_{ij}$. The canonical divergence $D$ is defined by $D(x,y)=\psi(x)+\varphi(y)-\dotprod{u(x)}{v(y)}$ for all $x,y\in\mc{M}$ where $\dotprod{\cdot}{\cdot}$ is the canonical inner product on $\R^n$ and $\psi,\varphi:\mc{M}\lto\R$ are smooth maps called potentials defined as follows:
\begin{enumerate}
    \item $d\psi=\sum_i{v^idu^i}$ for all $i\in\{1,...,n\}$ or equivalently without coordinates $d_x\psi(X)=\dotprod{v(x)}{d_xu(X)}$ for all $x\in\mc{M}$ and $X\in T_x\mc{M}$,
    \item $d\varphi=\sum_i{u^idv^i}$ for all $i\in\{1,...,n\}$ or $d_x\varphi(X)=\dotprod{u(x)}{d_xv(X)}$,
    \item $\psi(x)+\varphi(x)=\dotprod{u(x)}{v(x)}$ for all $x\in\mc{M}$.
\end{enumerate}
The equation $d\psi=\sum_i{v^idu^i}$ has a solution by Poincaré's lemma because $\mc{M}$ is simply connected and the differential form $\omega=\sum_i{v^idu^i}$ is closed. Indeed,  $g(\frac{\partial}{\partial u^i},\frac{\partial}{\partial u^j})=\frac{\partial v^k}{\partial u^j}g(\frac{\partial}{\partial u^i},\frac{\partial}{\partial v^k})=\frac{\partial v^i}{\partial u^j}$ and by symmetry of $g$, $g(\frac{\partial}{\partial u^i},\frac{\partial}{\partial u^j})=\frac{\partial v^j}{\partial u^i}$ so $\frac{\partial v^i}{\partial u^j}=\frac{\partial v^j}{\partial u^i}$. So $\psi$ is well defined up to an additive constant and $\varphi$ as well. Finally, $d_x(\psi+\varphi)(X)=\dotprod{v(x)}{d_xu(X)}+\dotprod{u(x)}{d_xv(X)}=d_x(\dotprod{u}{v})(X)$ so there exists a constant $c\in\R$ such that $\psi(x)+\varphi(x)=\dotprod{u(x)}{v(x)}+c$ for all $x\in\mc{M}$. We can impose $c=0$ by choosing the constant in $\varphi$ appropriately.
\end{definition}

\subsection{Principle of balanced bilinear forms}\label{subsec:principle_of_balanced_bilinear_forms}

The principle of balanced bilinear forms \cite{Thanwerdas19-balanced} provides a bilinear form by combining the parallel transports of two flat metrics via the Frobenius inner product. We can give a more general definition of a balanced bilinear form by choosing any inner product on symmetric matrices, although we focus on the Frobenius inner product afterwards.

\begin{principle}[Principle of balanced bilinear forms]
We fix $\dotprod{\cdot}{\cdot}$ an inner product on $\Sym(n)$.
Let $g^+,g^-$ be two flat Riemannian metrics on $\SPD(n)$. We denote $\nabla^+,\nabla^-$ their Levi-Civita connections and $\Pi^+,\Pi^-$ their associated parallel transport maps that do not depend on the curve since the metrics are flat. Then the balanced bilinear form associated to $g^+$ and $g^-$ is defined by:
\begin{equation}
    g^0_\Sigma(X,Y)=\dotprod{\Pi^+_{\Sigma\to I_n}X}{\Pi^-_{\Sigma\to I_n}Y}.
\end{equation}
\end{principle}

\begin{theorem}[Relation between balanced metric and dually-flat manifold]\label{thm:relation_between_balanced_metric_and_dually-flat_manifold} \cite{Thanwerdas19-balanced}
Let $g^+,g^-$ be two flat Riemannian metrics on $\SPD(n)$. We denote $\nabla^+,\nabla^-$ their Levi-Civita connections. If the balanced bilinear form $g^0$ is a metric, then $(\SPD(n),g^0,\nabla^+,\nabla^-)$ is a dually-flat manifold, which automatically comes with a canonical divergence $D$ according to the previous section.
\end{theorem}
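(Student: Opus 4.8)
Since $\nabla^+$ and $\nabla^-$ are flat by hypothesis and $g^0$ is assumed to be a metric, the plan is to reduce the statement to the single point that $\nabla^+$ and $\nabla^-$ are mutually dual with respect to $g^0$; once this is in hand, $(\SPD(n),g^0,\nabla^+,\nabla^-)$ is a dualistic structure with both connections flat, i.e. a dually-flat manifold, and the canonical divergence is then produced by the construction of Section \ref{subsubsec:canonical_divergence}. To prove duality I would use characterization 3 of the dual-connections definition: it is enough to verify that $g^0_\Lambda(\Pi^+_{\Sigma\to\Lambda}X,\Pi^-_{\Sigma\to\Lambda}Y)=g^0_\Sigma(X,Y)$ for all $\Sigma,\Lambda\in\SPD(n)$, all $X,Y\in T_\Sigma\SPD(n)$, with the transports taken along any curve (legitimate because both metrics are flat, so the transports are path-independent).

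The core step would be to unfold the left-hand side with the definition of the balanced bilinear form,
\begin{equation*}
g^0_\Lambda(\Pi^+_{\Sigma\to\Lambda}X,\Pi^-_{\Sigma\to\Lambda}Y)=\dotprod{\Pi^+_{\Lambda\to I_n}\Pi^+_{\Sigma\to\Lambda}X}{\Pi^-_{\Lambda\to I_n}\Pi^-_{\Sigma\to\Lambda}Y},
\end{equation*}
and then use path-independence of parallel transport in a flat metric, $\Pi^+_{\Lambda\to I_n}\circ\Pi^+_{\Sigma\to\Lambda}=\Pi^+_{\Sigma\to I_n}$ and likewise for $\Pi^-$, to recognize the right-hand side as $\dotprod{\Pi^+_{\Sigma\to I_n}X}{\Pi^-_{\Sigma\to I_n}Y}=g^0_\Sigma(X,Y)$. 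This gives characterization 3, hence $\nabla^-=(\nabla^+)^*$ with respect to $g^0$ (and the converse follows from $(\nabla^*)^*=\nabla$), so $(g^0,\nabla^+,\nabla^-)$ is a dualistic structure.

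It remains to argue flatness in sense 2 of Definition \ref{def:flat_affine_connection}, as the definition of a dually-flat manifold requires: being Levi-Civita connections of flat Riemannian metrics, $\nabla^+$ and $\nabla^-$ are torsion-free with vanishing curvature, i.e. 1-flat, and since $\SPD(n)$ is a convex cone and hence simply connected, vanishing curvature gives a global parallel frame which torsion-freeness renders holonomic, yielding global $\nabla^\pm$-affine charts (equivalently, a flat metric on a simply connected manifold is isometric to an open subset of Euclidean space). Thus $(\SPD(n),g^0,\nabla^+,\nabla^-)$ is dually-flat; $\SPD(n)$ being simply connected, the hypotheses of Definition \ref{def:canonical_divergence} are met — a $\nabla^+$-affine chart $u$ together with its Legendre-dual, $\nabla^-$-affine chart $v$ normalized so that $g^0(\partial/\partial u^i,\partial/\partial v^j)=\delta_{ij}$ — and the canonical divergence $D$ follows.

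The mathematical content is essentially the one-line computation with path-independent parallel transports, so I do not expect a real obstacle. The only things needing care are bookkeeping: tracking the identification $T_{I_n}\SPD(n)\simeq\Sym(n)$ through which $\dotprod{\cdot}{\cdot}$ operates, invoking characterization 3 along arbitrary curves (valid precisely by flatness), and checking that the canonical-divergence construction applies — i.e. that $\SPD(n)$ is simply connected and that mutually dual affine coordinate systems exist, which is the standard Legendre-duality fact of dually-flat geometry.
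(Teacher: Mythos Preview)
Your argument is correct and is exactly the natural one: unfolding the definition of $g^0$ and using path-independence of flat parallel transport to verify characterization~3 of duality is the one-line content of this theorem, and your handling of 2-flatness via simple connectedness of $\SPD(n)$ is appropriate. Note, however, that the present paper does not actually prove Theorem~\ref{thm:relation_between_balanced_metric_and_dually-flat_manifold}; it is quoted from \cite{Thanwerdas19-balanced} and no proof appears in the appendix, so there is nothing here to compare your approach against --- but what you wrote is precisely the argument one would expect, and it matches the spirit in which the paper invokes the result.
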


It would be nice to have a sufficient condition under which a balanced bilinear form is a metric. In \cite{Thanwerdas19-balanced}, we proved that, with the Frobenius inner product, if $g^+$ and $g^-$ are power-Euclidean metrics with powers $\alpha$ and $\beta$, then $g^0$ is a metric. In the following theorem, we give a weaker sufficient condition which allows to define the new family of Mixed-Euclidean metrics. The proof is in Appendix \ref{app}.

\begin{theorem}[Sufficient condition for a balanced bilinear form to be a metric]\label{thm:suff_cond_balanced_metric_frob}
Let $\dotprod{\cdot}{\cdot}=\Frob$ be the Frobenius inner product. Let $g^+,g^-$ be deformed-Euclidean metrics respectively associated to univariate diffeomorphisms $u$ and $v$. Then the balanced bilinear form $g^0$ is a metric.
\end{theorem}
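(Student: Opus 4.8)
The plan is to compute the balanced bilinear form $g^0$ in closed form, recognise it as a kernel metric, and then verify positive definiteness by a short sign argument. First I would identify the two parallel transports. A deformed-Euclidean metric $g^+=u^*g^{\mathrm E}$ is the pullback of the flat Euclidean metric (\ref{eq:euclidean_metric}) by the diffeomorphism $u$, and the Euclidean parallel transport is the identity under the canonical identification of every tangent space with $\Sym(n)$; hence the parallel-transport row of Table \ref{tab:riemannian_operations_deformed}, applied with $\Lambda=I_n$, gives $d_{I_n}u\big(\Pi^+_{\Sigma\to I_n}X\big)=d_\Sigma u(X)$. Since $I_n=\Diag(1,\dots,1)$, Lemma \ref{lemma_diff_univariate} yields $d_{I_n}u=u'(1)\,\Id_{\Sym(n)}$ with $u'(1)\neq 0$ (because $u$ is a diffeomorphism), so $\Pi^+_{\Sigma\to I_n}X=\tfrac1{u'(1)}\,d_\Sigma u(X)$, and likewise $\Pi^-_{\Sigma\to I_n}Y=\tfrac1{v'(1)}\,d_\Sigma v(Y)$. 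Substituting into the definition of $g^0$ with the Frobenius inner product gives
\[
g^0_\Sigma(X,Y)=\frac{1}{u'(1)\,v'(1)}\,\tr\!\big(d_\Sigma u(X)\,d_\Sigma v(Y)\big).
\]

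Next I would diagonalise $\Sigma=PDP^\top$ with $D=\Diag(d_1,\dots,d_n)$ and write $X=PX'P^\top$, $Y=PY'P^\top$; the first formula of Lemma \ref{lemma_diff_univariate} then turns the trace into
\[
g^0_\Sigma(X,Y)=\frac{1}{u'(1)\,v'(1)}\sum_{i,j}u^{[1]}(d_i,d_j)\,v^{[1]}(d_i,d_j)\,X'_{ij}Y'_{ij}.
\]
This is smooth, bilinear and, using the symmetry of the divided differences and of $X',Y'$ (equivalently the cyclicity of the trace in the previous display), symmetric in $(X,Y)$; moreover it has exactly the form of a kernel metric with kernel $\phi(x,y)=u'(1)v'(1)/\big(u^{[1]}(x,y)\,v^{[1]}(x,y)\big)$. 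So it only remains to show positive definiteness, i.e. that $u^{[1]}(x,y)\,v^{[1]}(x,y)/\big(u'(1)v'(1)\big)>0$ for all $x,y>0$.

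The heart of the argument — and the only place where the hypothesis that $u$ and $v$ are diffeomorphisms is genuinely used — is that the first divided difference of a $\mathcal{C}^1$-diffeomorphism of an interval has constant sign: $u'$ is nowhere zero on the connected set $(0,\infty)$, hence of constant sign, and $u^{[1]}(x,y)=u'(\xi)$ for some $\xi$ between $x$ and $y$ by the mean value theorem (and $u^{[1]}(x,x)=u'(x)$), so $u^{[1]}$ has everywhere the sign of $u'(1)$; thus $u^{[1]}(x,y)/u'(1)>0$ and, likewise, $v^{[1]}(x,y)/v'(1)>0$ for all $x,y>0$, whence the product is positive. This establishes that $g^0$ is a Riemannian metric — in fact a kernel metric with the kernel displayed above. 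I do not expect a serious obstacle: the computation of $\Pi^\pm$ and of $g^0$ is routine bookkeeping with Table \ref{tab:riemannian_operations_deformed} and Lemma \ref{lemma_diff_univariate}, and the substantive step is the short sign observation; the one point that needs a little care is checking that $g^0$ really is symmetric in $X$ and $Y$, which comes down to the symmetry of $u^{[1]}v^{[1]}$ and the cyclicity of the trace.
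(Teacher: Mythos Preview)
Your proposal is correct and follows essentially the same route as the paper: compute the parallel transports via the pullback formula, use $d_{I_n}u=u'(1)\,\Id$, expand $g^0$ in an eigenbasis of $\Sigma$ with Lemma~\ref{lemma_diff_univariate}, and conclude positivity by the mean value theorem applied to the monotone real map underlying each univariate diffeomorphism. The only cosmetic difference is that you phrase the result as recognising a kernel metric, whereas the paper simply checks that each coefficient $\tfrac{u^{[1]}(d_i,d_j)}{u'(1)}\tfrac{v^{[1]}(d_i,d_j)}{v'(1)}$ is positive; the substance is identical.
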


\section{The new family of mixed-Euclidean metrics}\label{subsec:mixed-euclidean}

\subsection{Definition}

\begin{definition}[Mixed-Euclidean metric $\ME(u,v)$]
The $(u,v)$-Mixed-Euclidean metric is the balanced metric $g^0$ defined in Theorem \ref{thm:suff_cond_balanced_metric_frob}. It is given by:
\begin{equation}
    g^{\ME(u,v)}_\Sigma(X,X)=\frac{1}{u'(1)v'(1)}\sum_{i,j}{u^{[1]}(d_i,d_j)v^{[1]}(d_i,d_j)X_{ij}'^2},
\end{equation}
where $X=PX'P^\top$, $\Sigma=PDP^\top$ with $P\in\Orth(n)$, $D=\diag(d_1,...,d_n)$.
\end{definition}

\begin{remark}
We notice that if we denote $\phi_u=\frac{u'(1)}{u^{[1]}}$ and $\phi_v=\frac{v'(1)}{v^{[1]}}$ the kernel maps associated to the $u,v$-deformed Euclidean metrics, the balanced metric is a kernel metric characterized by $\phi_{u,v}=\sqrt{\phi_u\phi_v}$. Hence, the principle of balanced bilinear forms seems to appear as a principle of mean of metrics.
\end{remark}

The family of Mixed-Euclidean metrics contains the family of Mixed-Power-Euclidean metrics \cite{Thanwerdas19-balanced} for $u=F_\alpha$ and $v=F_\beta$ where $F_\alpha=\pow_\alpha$ if $\alpha\ne 0$ and $F_0=\log$.
\begin{align}
    &(\mathrm{Log}\text{-}\mathrm{Euclidean}) &g^{\MPE(0,0)}_\Sigma(X,X)&=\tr({d_\Sigma\log(X)}^2),\\
    &(\mathrm{Power}\text{-}\mathrm{Euclidean}) &g^{\MPE(\alpha,\alpha)}_\Sigma(X,X)&=\frac{1}{\alpha^2}\tr(d_\Sigma\pow_\alpha(X)^2)
    \label{mpe(p,p)},\\
    &(\mathrm{Power}\text{-}\mathrm{affine}) &g^{\MPE(\alpha,-\alpha)}_\Sigma(X,X)&=\frac{1}{\alpha^2}\tr((\Sigma^{-\alpha}d_\Sigma\pow_\alpha(X))^2)
    \label{mpe(p,-p)},\\
    &(``\mathrm{Power}\text{-}\mathrm{BKM}") &g^{\MPE(\alpha,0)}_\Sigma(X,X)&=\frac{1}{\alpha}\tr(d_\Sigma\pow_\alpha(X)d_\Sigma\log(X)) \label{mpe(p,0)},\\
    &(\mathrm{General~MPE}) &g^{\MPE(\alpha,\beta)}_\Sigma(X,X)&=\frac{1}{\alpha\beta}\tr(d_\Sigma\pow_\alpha(X)d_\Sigma\pow_\beta(X)) \label{mpe(p,q)}.
\end{align}
As mentioned in \cite{Thanwerdas19-balanced}, this family interpolates between the log-Euclidean metric $(0,0)$, the power-Euclidean metrics $(\alpha,\alpha)$, the power-affine metrics $(\alpha,-\alpha)$ (including the affine-invariant metric $(1,-1)$) and the Bogoliubov-Kubo-Mori metric $(1,0)$.

\subsection{Information geometry of Mixed-Euclidean metrics}

As said in Theorem \ref{thm:relation_between_balanced_metric_and_dually-flat_manifold}, balanced metrics come with a canonical divergence. As Mixed-Euclidean metrics are the balanced metrics of two deformed-Euclidean metrics $u^*g^\mathrm{E}$ and $v^*g^\mathrm{E}$, it is straightforward that $u:\SPD(n)\lto\Sym(n)$ and $v:\SPD(n)\lto\Sym(n)$ provide flat coordinate systems for these respective metrics. The canonical divergence of this structure is known as the $(u,v)$-divergence in Information Geometry \cite[Section 4.5.2]{Amari16}. The novelty here is the relation we establish between Mixed-Euclidean metrics and $(u,v)$-divergences. In particular, the Mixed-Power-Euclidean metrics come with the so-called $(\alpha,\beta)$-divergences on SPD matrices \cite{Amari14}. This family contains the well known families of $\alpha$-divergences and $\beta$-divergences \cite[Formulae 69,70]{Amari14}. We state the correspondence between Mixed-Euclidean metrics and $(u,v)$-divergences in the following corollary of Theorem \ref{thm:relation_between_balanced_metric_and_dually-flat_manifold}. We recall the formulae of $(\alpha,\beta)$-divergences with the corresponding potentials and we illustrate the correspondence with two charts.

\begin{corollary}[Mixed-Euclidean metrics and $(u,v)$-divergences]\label{cor:mpe_metrics_and_ab_divergences}
Let $u,v$ be two univariate diffeomorphisms $u,v:\SPD(n)\lto\SPD(n)$. Then the manifold $(\SPD(n),g^{\ME(u,v)},u^*\nabla^{\mathrm{E}},v^*\nabla^{\mathrm{E}})$ is dually-flat and its canonical divergence is the $(u,v)$-divergence of Information Geometry \cite{Amari16}. In particular, the manifold $(\SPD(n),g^{\MPE(p,q)},\nabla^{\mathrm{PE}(p)},\nabla^{\mathrm{PE}(q)})$ is dually-flat and its canonical divergence is the $(\alpha,\beta)$-divergence \cite[Formulae 51,54,56,66]{Amari14}. The $(\alpha,\beta)$-divergences and the corresponding potentials (up to an additive constant, see Section \ref{subsubsec:canonical_divergence}) are:
\small
\begin{align}
    (\alpha=\beta=0)~ &D^{0,0}(\Sigma|\Sigma')=\frac{1}{2}\|\log(\Sigma)-\log(\Sigma')\|_\Frob^2,\\
    (\alpha=\beta\ne 0)~ &D^{\alpha,\alpha}(\Sigma|\Sigma')=\frac{1}{2\alpha^2}\|\Sigma^\alpha-\Sigma'^\alpha\|_\Frob^2,\\
    (\alpha=-\beta\ne 0)~ &D^{\alpha,-\alpha}(\Sigma|\Sigma')=-\frac{1}{\alpha^2}\tr\left[(I_n+\alpha\log\Sigma)-\alpha\log\Sigma'-\Sigma^\alpha\Sigma'^{-\alpha}\right],\\
    (\alpha\ne\beta=0)~ &D^{\alpha,0}(\Sigma|\Sigma')=\frac{1}{\alpha}\tr\left[\left(\Sigma^\alpha\log\Sigma-\frac{1}{\alpha}\Sigma^\alpha\right)+\frac{1}{\alpha}\Sigma'^\alpha-\Sigma^\alpha\log\Sigma'\right],\\
    (\alpha,\beta,\alpha\pm\beta\ne 0)~ &D^{\alpha,\beta}(\Sigma|\Sigma')=\frac{1}{\alpha\beta}\tr\left[\frac{\alpha}{\alpha+\beta}\Sigma^{\alpha+\beta}+\frac{\beta}{\alpha+\beta}\Sigma'^{\alpha+\beta}-\Sigma^\alpha\Sigma'^\beta\right],\\
    &\nonumber\\
    (\alpha=\beta=0)~ &\psi^{0,0}(\Sigma)=\frac{1}{2}\tr(\log(\Sigma)^2),\\
    (\alpha=\beta\ne 0)~ &\psi^{\alpha,\alpha}(\Sigma)=\frac{1}{2\alpha^2}\tr(\Sigma^{2\alpha}),\\
    (\alpha=-\beta\ne 0)~ &\psi^{\alpha,-\alpha}(\Sigma)=-\frac{1}{\alpha}\tr(\log\Sigma)=-\frac{1}{\alpha}\log(\det\Sigma)\\
    (\alpha\ne\beta=0)~ &\psi^{\alpha,0}(\Sigma)=\frac{1}{\alpha}\tr(\Sigma^\alpha\log\Sigma-\frac{1}{\alpha}\Sigma^\alpha),\\
    (\alpha,\beta,\alpha\pm\beta\ne 0)~ &\psi^{\alpha,\beta}(\Sigma)=\frac{1}{\beta(\alpha+\beta)}\tr(\Sigma^{\alpha+\beta}).
\end{align}
\normalsize
The $(\alpha,\beta)$-divergences on SPD matrices can also be obtained by extending the $(\alpha,\beta)$-divergences \textbf{on positive discrete measures} \cite{Cichocki11}. Indeed, a positive discrete measure is a vector of positive numbers so the diagonal matrix of eigenvalues of an SPD matrix can be considered as a positive discrete measure. Then the $(\alpha,\beta)$-potential on positive diagonal matrices is extended by $\Orth(n)$-invariance, which defines the $(\alpha,\beta)$-potential and the $(\alpha,\beta)$-divergence on SPD matrices \cite{Amari14}. Conversely, the $(\alpha,\beta)$-divergences on SPD matrices define divergences on positive discrete measures when restricted to positive diagonal matrices. So there is a one-to-one correspondence between $(\alpha,\beta)$-divergences on SPD matrices \cite{Amari14} (or Mixed-Power-Euclidean metrics) and $(\alpha,\beta)$-divergences on positive discrete measures \cite{Cichocki11}. This correspondence is given on Figure \ref{fig:mpe_metrics_and_ab_divergences}. The graph on the right is essentially borrowed from \cite{Cichocki11} with complements from \cite{Cichocki10}.
\begin{figure}[htbp]
    \centering
    \includegraphics[scale=0.38]{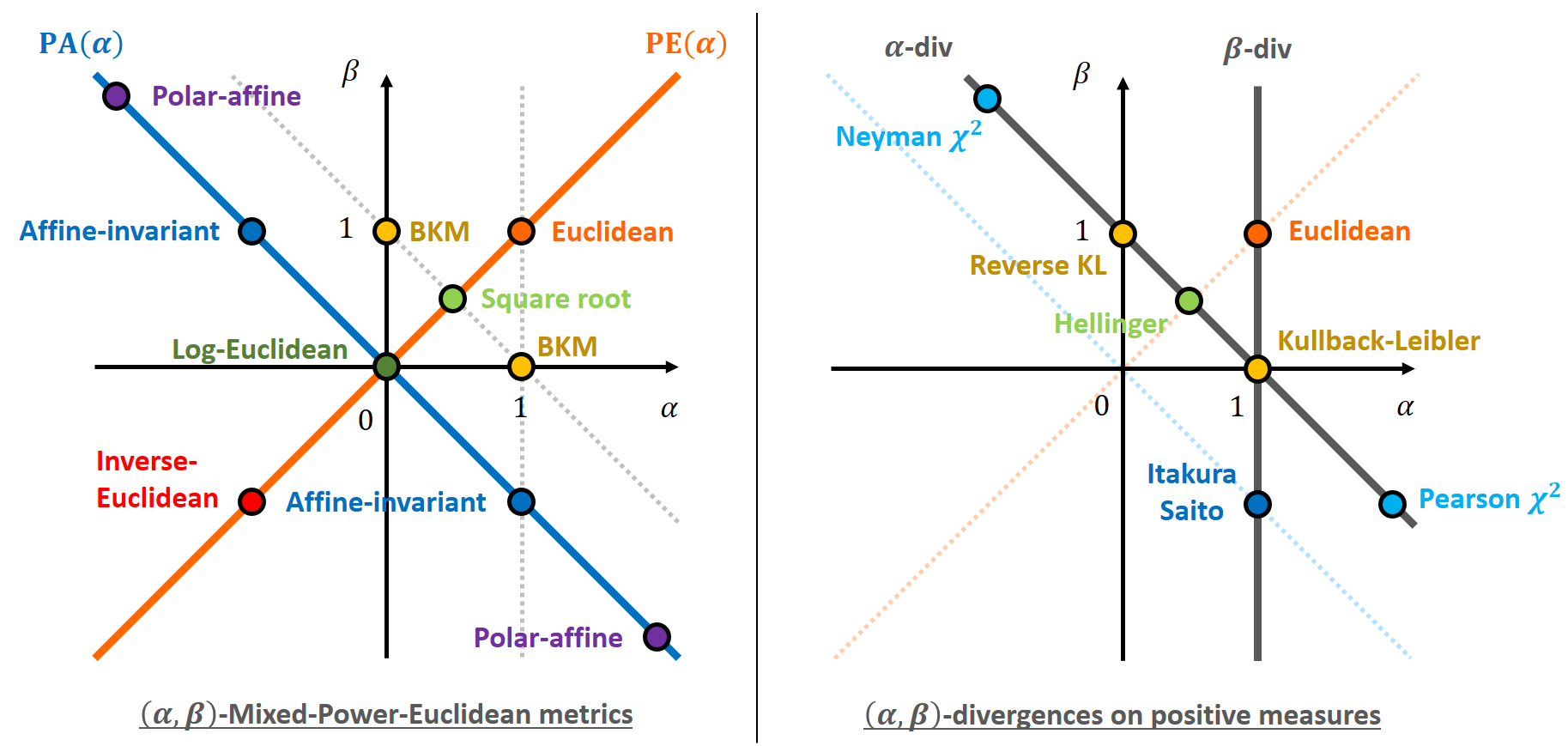}
    \caption{Correspondence between MPE metrics on SPD matrices and $(\alpha,\beta)$-divergences on positive discrete measures.}
    \label{fig:mpe_metrics_and_ab_divergences}
\end{figure}
\end{corollary}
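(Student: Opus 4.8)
\emph{Plan.} I would not build the divergence from scratch. The dually-flat structure is forced by the two theorems already proved, and the canonical divergence of a simply connected dually-flat manifold is unique; so the task reduces to (i) identifying the flat affine coordinate systems, (ii) solving the potential equations of Definition~\ref{def:canonical_divergence} in closed form, (iii) recognizing the result as the $(u,v)$-divergence, and (iv) specializing to $u=F_\alpha$, $v=F_\beta$ and restricting to diagonal matrices.

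\emph{Step 1: the dually-flat structure.} By Theorem~\ref{thm:suff_cond_balanced_metric_frob}, $g^{\ME(u,v)}$ is a metric, and it is by construction the balanced metric of the deformed-Euclidean metrics $u^*g^{\mathrm{E}}$ and $v^*g^{\mathrm{E}}$, so Theorem~\ref{thm:relation_between_balanced_metric_and_dually-flat_manifold} gives that $(\SPD(n),g^{\ME(u,v)},\nabla^+,\nabla^-)$ is dually flat, where $\nabla^{\pm}$ are the Levi-Civita connections of $u^*g^{\mathrm{E}}$ and $v^*g^{\mathrm{E}}$. Since $u$ and $v$ are isometries onto open subsets of $(\Sym(n),g^{\mathrm{E}})$, whose Levi-Civita connection is the canonical flat connection $\partial=\nabla^{\mathrm{E}}$, Table~\ref{tab:riemannian_operations_deformed} gives $\nabla^+=u^*\nabla^{\mathrm{E}}$ and $\nabla^-=v^*\nabla^{\mathrm{E}}$, and these are $2$-flat because $u,v$ are global affine charts of $\SPD(n)$ onto open subsets of the vector space $\Sym(n)$. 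As $\SPD(n)$ is convex, hence simply connected, Definition~\ref{def:canonical_divergence} applies and the canonical divergence is well defined and depends only on this structure.

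\emph{Step 2: affine coordinates, potentials, and the $(u,v)$-divergence.} From the deformed-metric formulae the parallel transports are $\Pi^+_{\Sigma\to I_n}=\frac{1}{u'(1)}\,d_\Sigma u$ and $\Pi^-_{\Sigma\to I_n}=\frac{1}{v'(1)}\,d_\Sigma v$ (using $d_{I_n}u=u'(1)\Id$ for a univariate map), so that $g^{\ME(u,v)}_\Sigma(X,Y)=\frac{1}{u'(1)v'(1)}\tr(d_\Sigma u(X)\,d_\Sigma v(Y))$. Expressing $u$ and $v$ in a fixed $\Frob$-orthonormal basis of $\Sym(n)$ yields a $\nabla^+$-affine coordinate system and a $\nabla^-$-affine coordinate system, and rescaling one of them by the constant $u'(1)v'(1)$ produces the biorthogonal pair required in Definition~\ref{def:canonical_divergence}. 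The computational heart is the identity $d_\Sigma[\tr\circ w](X)=\sum_i w'(d_i)X_{ii}'$ for any smooth $w:(0,\infty)\lto\R$ (a consequence of Lemma~\ref{lemma_diff_univariate}, the off-diagonal terms vanishing under the trace): choosing $w$ with $w'=u'\,v$ and $\tilde w$ with $\tilde w'=u\,v'$, the maps $\psi=\frac{1}{u'(1)v'(1)}\tr\circ w$ and $\varphi=\frac{1}{u'(1)v'(1)}\tr\circ\tilde w$ solve $d\psi=\sum_i v^i\,du^i$ and $d\varphi=\sum_i u^i\,dv^i$, and satisfy $\psi+\varphi=\frac{1}{u'(1)v'(1)}\tr(u(\Sigma)v(\Sigma))$ because $(uv)'=w'+\tilde w'$. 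Hence the canonical divergence is $D(\Sigma\mid\Sigma')=\psi(\Sigma)+\varphi(\Sigma')-\frac{1}{u'(1)v'(1)}\tr(u(\Sigma)\,v(\Sigma'))$, which is exactly the $(u,v)$-divergence of \cite{Amari16}; alternatively one invokes uniqueness of the canonical divergence together with the fact that this $(u,v)$-divergence is built from the same pair of flat affine structures.

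\emph{Step 3: the $(\alpha,\beta)$ specialization, discrete measures, and the main difficulty.} Taking $u=F_\alpha$ and $v=F_\beta$ one integrates $w'=F_\alpha'F_\beta$ and $\tilde w'=F_\alpha F_\beta'$ in the five regimes ($\alpha=\beta=0$; $\alpha=\beta\ne0$; $\alpha=-\beta\ne0$; $\alpha\ne\beta=0$; $\alpha,\beta,\alpha\pm\beta\ne0$), the logarithmic terms arising from $\int s^{\gamma-1}ds=\log s$ at $\gamma=0$ and from $\int s^{\alpha-1}\log s\,ds=\frac{1}{\alpha}s^\alpha\log s-\frac{1}{\alpha^2}s^\alpha$; this recovers the listed potentials $\psi^{\alpha,\beta}$ (with $\varphi^{\alpha,\beta}=\psi^{\beta,\alpha}$ by the $\alpha\leftrightarrow\beta$ symmetry of $w\leftrightarrow\tilde w$) and, after substitution into the formula for $D$, the listed divergences $D^{\alpha,\beta}$. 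Finally, every potential and divergence above is an $\Orth(n)$-invariant spectral function, so its restriction to $\Diag^+(n)$ coincides with the corresponding $(\alpha,\beta)$-divergence of \cite{Cichocki11} on positive discrete measures, while conversely the $\Orth(n)$-invariant extension of such a function is unique, which gives the bijection of Figure~\ref{fig:mpe_metrics_and_ab_divergences}. The conceptual content (dual flatness) is immediate from Step~1; the real work is the bookkeeping of the normalizing constants $u'(1),v'(1)$ needed to make the coordinate systems biorthogonal, checking that the resulting Bregman-type divergence matches the exact sign and scaling conventions of \cite{Amari14,Amari16}, and treating the limiting logarithmic regimes in the $(\alpha,\beta)$ case analysis carefully.
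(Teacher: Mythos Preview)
Your proposal is correct and matches the paper's intended argument: the paper presents this as an immediate corollary of Theorem~\ref{thm:relation_between_balanced_metric_and_dually-flat_manifold} (for the dually-flat structure) and simply states that the divergence formulae ``can be computed either from [the integral formula in \cite{Amari16}] or directly,'' without spelling out the computation. Your Steps~1--3 carry out precisely that direct computation, supplying the details (biorthogonality rescaling, the spectral trace identity, the case-by-case integration) that the paper omits.
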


\begin{remark}
The affine-invariant/Fisher-Rao metric is associated to the Kullback-Leibler divergence \textbf{of centered multivariate Gaussian densities}, which differs from the $\Orth(n)$-invariant extension of the Kullback-Leibler divergence \textbf{of positive discrete measures} represented on Figure \ref{fig:mpe_metrics_and_ab_divergences}.
\end{remark}

The $(u,v)$-divergences can be expressed via an integral formula \cite[Formula (4.170)]{Amari16} following Definition \ref{def:canonical_divergence}. The formulae of the previous corollary can thus be computed either from that formula or directly.

\subsection{Riemannian geometry of Mixed-Euclidean metrics}

Another immediate consequence of the relation between balanced metrics and dually-flat manifolds is that the Levi-Civita connection of the Mixed-Euclidean metric $\MPE(u,v)$ is simply the arithmetic mean of the Levi-Civita connections of the deformed-Euclidean metrics $u^*g^{\mathrm{E}}$ and $v^*g^{\mathrm{E}}$.

\begin{corollary}[Levi-Civita connection of Mixed-Euclidean metrics]
\begin{equation}
    \nabla^{\ME(u,v)}_{X_\Sigma}Y=\partial_{X_\Sigma}Y+\frac{1}{2}((d_\Sigma u)^{-1}(H_\Sigma u(X,Y))+(d_\Sigma v)^{-1}(H_\Sigma v(X,Y)))
\end{equation}
\end{corollary}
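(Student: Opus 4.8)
**

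The plan is to derive the Levi-Civita connection of the Mixed-Euclidean metric directly from the fact, recalled in Theorem~\ref{thm:relation_between_balanced_metric_and_dually-flat_manifold} and Corollary~\ref{cor:mpe_metrics_and_ab_divergences}, that $(\SPD(n), g^{\ME(u,v)}, u^*\nabla^{\mathrm{E}}, v^*\nabla^{\mathrm{E}})$ is a dually-flat manifold. The two flat connections $\nabla^+ := u^*\nabla^{\mathrm{E}}$ and $\nabla^- := v^*\nabla^{\mathrm{E}}$ are the Levi-Civita connections of the Riemannian metrics $u^*g^{\mathrm{E}}$ and $v^*g^{\mathrm{E}}$, hence torsion-free; by the duality relation in the Definition of dual connections, their arithmetic mean $\tfrac{1}{2}(\nabla^+ + \nabla^-)$ is a torsion-free metric connection for $g^{\ME(u,v)}$, and therefore equals $\nabla^{\ME(u,v)}$. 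So the entire statement reduces to computing $\nabla^\pm$ explicitly and averaging.

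First I would compute $\nabla^+ = \nabla^{u^*g^{\mathrm{E}}}$ using the Levi-Civita row of Table~\ref{tab:riemannian_operations_deformed} applied to the deformation $f = u$ and the base metric $g = g^{\mathrm{E}}$. That row reads $d_\Sigma f(\nabla^f_{X_\Sigma}Y) = \nabla_{d_\Sigma f(X)}(df(Y))$, where $\nabla$ on the right is the Levi-Civita connection of the Euclidean metric, namely the canonical flat connection $\partial$ on $\Sym(n)$. Thus $d_\Sigma u(\nabla^+_{X_\Sigma}Y) = \partial_{d_\Sigma u(X)}(du(Y))$. The key identity is that differentiating the vector field $du(Y)$ along $\SPD(n)$ and then reading it in the flat chart amounts to $\partial_{d_\Sigma u(X)}(du(Y)) = d_\Sigma u(\partial_{X_\Sigma}Y) + H_\Sigma u(X,Y)$, where $H_\Sigma u$ is the Hessian of the univariate map $u$ (this is just the chain rule / Faà di Bruno identity for the second derivative of $u$ composed with a curve, written invariantly). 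Applying $(d_\Sigma u)^{-1}$ gives $\nabla^+_{X_\Sigma}Y = \partial_{X_\Sigma}Y + (d_\Sigma u)^{-1}(H_\Sigma u(X,Y))$, and symmetrically for $\nabla^-$ with $v$ in place of $u$.

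Averaging the two expressions yields exactly the claimed formula
\[
\nabla^{\ME(u,v)}_{X_\Sigma}Y = \partial_{X_\Sigma}Y + \tfrac{1}{2}\bigl((d_\Sigma u)^{-1}(H_\Sigma u(X,Y)) + (d_\Sigma v)^{-1}(H_\Sigma v(X,Y))\bigr).
\]
The main obstacle is the careful justification of the Hessian identity $\partial_{d_\Sigma u(X)}(du(Y)) = d_\Sigma u(\partial_{X_\Sigma}Y) + H_\Sigma u(X,Y)$: one must be precise about what "the vector field $du(Y)$" means (it is the pushforward $u_*Y$, a vector field along the image, which since $u$ is a diffeomorphism onto an open subset of $\Sym(n)$ can be genuinely differentiated by $\partial$), and one must check that the second-order term produced is indeed symmetric in $X,Y$ and coincides with the Hessian $H_\Sigma u$ of Lemma~\ref{lemma_diff_univariate}. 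Everything else — torsion-freeness of $\nabla^\pm$, the reduction via duality, the linearity of the average — is formal. Since the excerpt states the proofs are deferred to the appendix, I would present this as the skeleton and relegate the Hessian bookkeeping there.
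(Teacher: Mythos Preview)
Your proposal is correct and follows exactly the paper's reasoning: the paper presents this corollary as an ``immediate consequence'' of the dually-flat structure, namely that the Levi-Civita connection is the arithmetic mean $\tfrac{1}{2}(u^*\nabla^{\mathrm{E}}+v^*\nabla^{\mathrm{E}})$ of the two torsion-free dual connections, and does not give a separate proof in the appendix. Your explicit computation of $u^*\nabla^{\mathrm{E}}$ via the Hessian identity $\partial_{d_\Sigma u(X)}(du(Y))=d_\Sigma u(\partial_{X_\Sigma}Y)+H_\Sigma u(X,Y)$ is the natural way to unpack this, and matches the Christoffel term $\Gamma_\Sigma(X,Y)=\tfrac{1}{2}G_\Sigma^{-1}(d_\Sigma G(X)(Y))$ that the paper uses (without derivation) in the proof of Theorem~\ref{thm:curvature_mixed-euclidean}.
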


It is even possible to compute the curvature following the same ideas as for the BKM metric in \cite{Michor00}. The proof is given in Appendix \ref{app}.

\begin{theorem}[Curvature of Mixed-Euclidean metrics]\label{thm:curvature_mixed-euclidean}
Let $u,v:\SPD(n)\lto\SPD(n)$ be two univariate diffeomorphisms. We define the univariate diffeomorphism $w=v\circ u^{-1}$ so that $u:(\SPD(n),g^{\ME(u,v)})\lto(\SPD(n),\frac{w'(1)}{u'(1)v'(1)}g^{\ME(\Id,w)})$ is an isometry. For $\Sigma=PDP^\top\in\SPD(n)$, we denote $X=PX'P^\top\in T_\Sigma\SPD(n)$ and analogously for $Y,Z,T\in T_\Sigma\SPD(n)$, we denote $u_{ij}=u^{[1]}(d_i,d_j)$, $u_{ijk}=u^{[2]}(d_i,d_j,d_k)$ and analogously for $v,w$. We denote $m_{ij}=w^{[1]}(u(d_i),u(d_j))=\frac{v_{ij}}{u_{ij}}$ and $m_{ijk}=w^{[2]}(u(d_i),u(d_j),u(d_k))$. Then the curvature of the mixed-Euclidean metric $g^{\ME(u,v)}$ is:
\begin{align}
    R^{\ME(u,v)}_\Sigma(X,Y,Z,T)=\frac{1}{u'(1)v'(1)}\sum_{i,j,k,l}\rho_{ijkl}&(X'_{ij}Y'_{jk}Z'_{kl}T'_{li}-Y'_{ij}X'_{jk}Z'_{kl}T'_{li}\\
    &+X'_{ij}Z'_{jk}Y'_{kl}T'_{li}-Y'_{ij}Z'_{jk}X'_{kl}T'_{li}),\nonumber
\end{align}
where $\rho_{ijkl}=\frac{m_{ijl}m_{jlk}}{2m_{jl}}u_{ij}u_{jk}u_{kl}u_{li}=\frac{1}{2u_{jl}v_{jl}}(u_{ij}v_{ijl}-v_{ij}u_{ijl})(u_{jk}v_{jkl}-v_{jk}u_{jkl})$ is symmetric in $i\leftrightarrow k$, in $j\leftrightarrow l$ and in $u\leftrightarrow v$. In particular, at $\Sigma=I_n$, the curvature is:
\begin{equation}
    R_{I_n}^{\ME(u,v)}(X,Y,Z,T)=\frac{1}{4}\left[\left(\ln\left|\frac{v'}{u'}\right|\right)'(1)\right]^2 R_{I_n}^{\mathrm{A}}(X,Y,Z,T),
\end{equation}
where $\mathrm{A}$ stands for the affine-invariant metric (Formula \ref{eq:affine-invariant_metric}). Therefore, the sectional curvature of the mixed-Euclidean metric at $I_n$ takes non-positive values. In particular, for mixed-power-Euclidean metrics $\MPE(\alpha,\beta)$ with $\alpha^2\ne\beta^2$ (thus excluding log-Euclidean, power-Euclidean and power-affine metrics), since $\kappa_{\lambda \Sigma}^{\MPE(\alpha,\beta)}(X,Y)=\lambda^{-(\alpha+\beta)}\times\kappa_\Sigma^{\MPE(\alpha,\beta)}(X,Y)$ for all $\lambda>0$, the lower bound of the sectional curvature is $-\infty$.
\end{theorem}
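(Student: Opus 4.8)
The plan is to first reduce to the case $\ME(\Id,w)$ and then compute the curvature of that metric by realising its Levi-Civita connection as an average of two flat connections. The isometry of the statement follows from the chain rule for first divided differences, $(v\circ u^{-1})^{[1]}(u(x),u(y))=v^{[1]}(x,y)/u^{[1]}(x,y)$, obtained by substituting $x=u^{-1}(a)$, $y=u^{-1}(b)$ in the defining quotient and extending by continuity to the diagonal: inserting $\Lambda=u(\Sigma)$ together with $d_\Sigma u(X)$, whose eigen-coordinates are $u_{ij}X'_{ij}$, into $g^{\ME(\Id,w)}$ turns $\tfrac{w'(1)}{u'(1)v'(1)}\,u^*g^{\ME(\Id,w)}$ into $g^{\ME(u,v)}$. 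Since an isometry intertwines curvature tensors and a constant rescaling of a metric rescales its $(0,4)$-curvature by the same constant, it then suffices to compute $R^{\ME(\Id,w)}$ and transport it along $u$.

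By the preceding corollary on the Levi-Civita connection, $\nabla^{\ME(\Id,w)}=\tfrac12(\partial+\nabla^-)$ with $\nabla^-=w^*\partial$ flat and torsion-free, and difference tensor $A(X,Y):=\nabla^-_XY-\partial_XY=(d_\Sigma w)^{-1}(H_\Sigma w(X,Y))$. Expanding the curvature of $\partial+\tfrac12A$ in terms of $R^\partial$ and the $\partial$-covariant derivative of $A$, and then using that \emph{both} $\partial$ and $\nabla^-$ are flat — the vanishing of $R^{\nabla^-}$ being precisely what removes the derivative-of-$A$ terms — one gets the clean identity
\[
R^{\ME(\Id,w)}(X,Y)Z=-\tfrac14\big(A(X,A(Y,Z))-A(Y,A(X,Z))\big),
\]
which is the same device used for the BKM metric in \cite{Michor00}.

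Next I would pass to eigen-coordinates. Since $g^{\ME(\Id,w)}$ is $\Orth(n)$-invariant, so is its curvature, and it suffices to evaluate it at a diagonal $\Lambda$ with eigenvalues $\mu_i=u(d_i)$; there Lemma \ref{lemma_diff_univariate} gives $[(d w)^{-1}W]_{ij}=W_{ij}/m_{ij}$ and, after polarisation, $[Hw(X,Y)]_{ij}=\sum_k m_{ijk}(X_{ik}Y_{jk}+X_{jk}Y_{ik})$, hence $[A(X,Y)]_{ij}=\sum_k\frac{m_{ijk}}{m_{ij}}(X_{ik}Y_{jk}+X_{jk}Y_{ik})$. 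Substituting this twice into the identity above, lowering the free index with $g^{\ME(\Id,w)}_\Lambda=\frac1{w'(1)}\sum_{ij}m_{ij}(\cdot)_{ij}(\cdot)_{ij}$ (which cancels the outermost $1/m_{il}$), and relabelling indices so that the surviving monomials take the cyclic shape $X_{ij}Y_{jk}Z_{kl}T_{li}$, the raw monomials collapse — via the $i\leftrightarrow l$ symmetry of $m$ and of $T$, and via antisymmetrisation in $X\leftrightarrow Y$ — to the four-term bracket of the statement with coefficient $\frac{m_{ijl}m_{jlk}}{2m_{jl}}$; pulling back along $u$ then replaces $\frac1{w'(1)}$ by $\frac1{u'(1)v'(1)}$ and inserts one factor $u_{\bullet\bullet}$ per tangent vector, yielding $\rho_{ijkl}$. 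This bookkeeping — tracking the polarisation terms, the antisymmetrisation and the index signs so that the raw monomials assemble into exactly the stated four-term bracket, together with the computational check that the two closed forms of $\rho_{ijkl}$ agree (hence the symmetries $i\leftrightarrow k$, $j\leftrightarrow l$, $u\leftrightarrow v$), which rests on the chain rule for \emph{second} divided differences — is the one genuinely laborious step; everything else is formal.

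Finally, at $\Sigma=I_n$ every eigenvalue equals $1$, so $u_{ij}=u'(1)$, $v_{ij}=v'(1)$, $u_{ijk}=\tfrac12u''(1)$, $v_{ijk}=\tfrac12v''(1)$; plugging these into the second form of $\rho_{ijkl}$ turns it into the constant $\frac{u'(1)v'(1)}{8}\big[(\ln|v'/u'|)'(1)\big]^2$ with $(\ln|v'/u'|)'(1)=\frac{v''(1)}{v'(1)}-\frac{u''(1)}{u'(1)}$, after which the remaining fourfold sum of cyclic monomials is a fixed multiple of $\tr([X,Y][Z,T])$, i.e. of the affine-invariant curvature $R^{\mathrm{A}}_{I_n}$ (Formula \ref{eq:affine-invariant_metric}); matching the constants gives $R^{\ME(u,v)}_{I_n}=\tfrac14\big[(\ln|v'/u'|)'(1)\big]^2R^{\mathrm{A}}_{I_n}$, and non-positivity of the sectional curvature at $I_n$ follows from that of the affine-invariant metric. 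For the MPE assertion, the divided differences of $\pow_\alpha$ and of $\log$ are homogeneous of degree $\alpha-1$ and $-1$, so $\Sigma\mapsto\lambda\Sigma$ is a homothety of $g^{\MPE(\alpha,\beta)}$ of ratio $\lambda^{\alpha+\beta}$, whence $\kappa^{\MPE(\alpha,\beta)}_{\lambda\Sigma}(X,Y)=\lambda^{-(\alpha+\beta)}\kappa^{\MPE(\alpha,\beta)}_\Sigma(X,Y)$; when $\alpha^2\ne\beta^2$ one has $\beta-\alpha\ne0$, so some plane at $I_n$ has strictly negative sectional curvature, and $\alpha+\beta\ne0$, so $\lambda^{-(\alpha+\beta)}\to+\infty$ either as $\lambda\to0^+$ or as $\lambda\to+\infty$, forcing the sectional curvature to be unbounded below.
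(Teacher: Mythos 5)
Your proposal is correct and follows essentially the same route as the paper: reduce to $\ME(\Id,w)$ via the isometry $u$, use the Michor-style identity $R(X,Y)Z=-\tfrac14\bigl(A(X,A(Y,Z))-A(Y,A(X,Z))\bigr)$ for a Levi-Civita connection that is the average of two flat connections (your derivation from $R^{\nabla^-}=0$ is exactly the content of the paper's $G$/$dG$ formula), then compute in eigen-coordinates with divided differences, pull back, evaluate at $I_n$, and invoke homogeneity for the MPE unboundedness. The only difference is presentational: you leave the index bookkeeping and the equivalence of the two closed forms of $\rho_{ijkl}$ as an acknowledged computation, while the paper carries them out explicitly.
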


It seems difficult to determine theoretically whether the sectional curvature of mixed-Euclidean metrics (again, excluding $\MPE(\alpha,\beta)$ with $\alpha^2=\beta^2$) can take positive values. On Figure \ref{fig:bounds_curvature_mpe}, we show numerical results which make us think that this is the case. Indeed, we observe numerically that for all $\alpha,\beta\in\{0.05\,k|\,k\in\{-40,...,40\}\}$ such that $\alpha^2\ne\beta^2$, we have $\kappa^{\MPE(\alpha,\beta)}_{\mathrm{min}}<0$ and $\kappa^{\MPE(\alpha,\beta)}_{\mathrm{max}}>0$. These simulations also tend to show that, at a given point $\Sigma$, the negative values taken by the sectional curvature are much larger in absolute value than the positive values taken by the sectional curvature.

From Figure \ref{fig:bounds_curvature_mpe}, it appears that power-Euclidean metrics (flat), power-affine metrics (Hadamard) and the log-Euclidean metric at the intersection play a special role among the family of Mixed-Power-Euclidean since all others apparently admit positive and negative sectional curvature.

\begin{figure}[h]
    \centering
    \includegraphics[scale=.7]{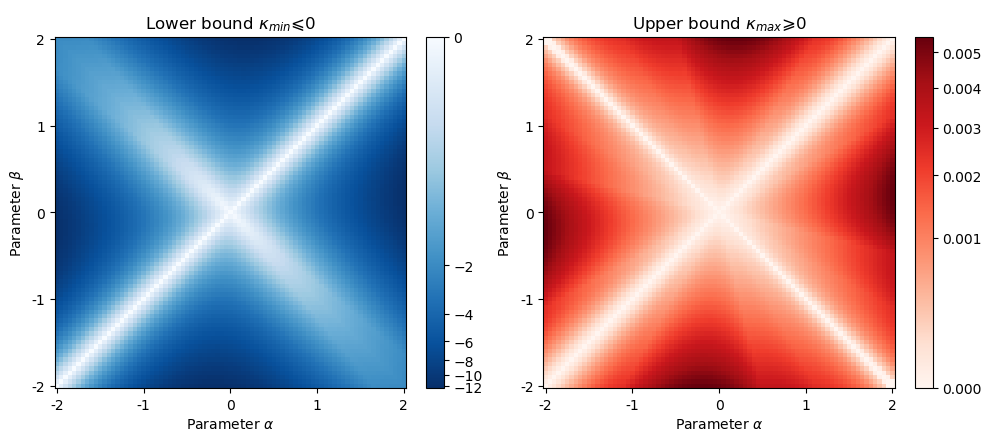}
    \caption{\textbf{Lower and upper bounds of the sectional curvature of the mixed-power-Euclidean metrics}. \textbf{Left}: lower bound. \textbf{Right}: upper bound. The lower bound of the power-affine metrics ($\beta=-\alpha$) is known as $-\frac{\alpha^2}{2}$ \cite{Thanwerdas21-LAA}. The bounds were obtained in dimension $3$ by taking $1000$ random positive diagonal matrices $D$ of determinant $1$ (to avoid scaling effects), $1000$ random pairs of symmetric matrices $(X,Y)$ and computing the sectional curvature $\kappa^{\MPE(\alpha,\beta)}_D(X,Y)=\frac{R_D(X,Y,X,Y)}{g_D(X,X)g_D(Y,Y)-g_D(X,Y)^2}$ for $(\alpha,\beta)\in[-2,2]^2$ with a step $\Delta\alpha=\Delta\beta=0.05$. Diagonal matrices are taken instead of SPD matrices because the MPE metrics are $\Orth(n)$-invariant.}
    \label{fig:bounds_curvature_mpe}
\end{figure}

In addition, for Mixed-Power-Euclidean metrics, we can also compute the geodesics, the logarithm map and the distance between commuting matrices. These formulae are proved in Appendix \ref{app}.

\begin{theorem}[Riemannian operations of MPE metrics]\label{thm:riemannian_operations_of_mpe_metrics}
Let $\alpha,\beta\in\R$ such that $\alpha+\beta\ne 0$, thus excluding log-Euclidean and power-affine metrics. Table \ref{tab:riemannian_operations_mpe} summarizes the formulae of the geodesics, the logarithm map and the distance in the particular case where $\Sigma,\Lambda\in\SPD(n)$ and $V\in T_\Sigma\SPD(n)$ \textbf{commute}. They essentially reduce to the formulae of the $\alpha_0$-power-Euclidean metric with $\alpha_0=\frac{\alpha+\beta}{2}$. These formulae are generally not valid for non-commuting matrices.
\end{theorem}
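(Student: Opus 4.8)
The plan is to deduce all three formulae by comparing $g^{\MPE(\alpha,\beta)}$ with the power-Euclidean metric $g^{\PE(\alpha_0)}$ of power $\alpha_0=\frac{\alpha+\beta}{2}\neq 0$, exploiting the fact that the two metrics \emph{coincide} along any curve of mutually commuting matrices. By the Mixed-Euclidean formula with $u=F_\alpha$, $v=F_\beta$, the metric $g^{\MPE(\alpha,\beta)}$ is the kernel metric whose coefficient on $X_{ij}'^2$ at $\Sigma=PDP^\top$ is $\frac{1}{F_\alpha'(1)F_\beta'(1)}F_\alpha^{[1]}(d_i,d_j)\,F_\beta^{[1]}(d_i,d_j)$, while $g^{\PE(\alpha_0)}=g^{\MPE(\alpha_0,\alpha_0)}$ has coefficient $\frac{1}{F_{\alpha_0}'(1)^2}\big(F_{\alpha_0}^{[1]}(d_i,d_j)\big)^2$. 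Writing $q_\gamma(x,y)=\int_y^x t^{\gamma-1}\,dt$ (so $q_\gamma=\frac{x^\gamma-y^\gamma}{\gamma}$ for $\gamma\neq0$ and $q_0=\log x-\log y$), one checks $\frac{F_\gamma^{[1]}(x,y)}{F_\gamma'(1)}=\frac{q_\gamma(x,y)}{x-y}$ for $x\neq y$ and $\to x^{\gamma-1}$ as $y\to x$; hence the coefficient of $g^{\MPE(\alpha,\beta)}$ is $\frac{q_\alpha(d_i,d_j)q_\beta(d_i,d_j)}{(d_i-d_j)^2}$, that of $g^{\PE(\alpha_0)}$ is $\frac{q_{\alpha_0}(d_i,d_j)^2}{(d_i-d_j)^2}$, and both equal $d_i^{2\alpha_0-2}$ when $d_i=d_j$.

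The first and key step is the pointwise domination $g^{\MPE(\alpha,\beta)}\geq g^{\PE(\alpha_0)}$ on $\SPD(n)$, with equality at $(\Sigma,V)$ whenever $\Sigma$ and $V$ commute. Since both are kernel metrics, this reduces to the scalar inequality $q_{\alpha_0}(x,y)^2\leq q_\alpha(x,y)\,q_\beta(x,y)$ for $x>y>0$, which is exactly the Cauchy--Schwarz inequality applied to $q_{\alpha_0}(x,y)=\int_y^x t^{\frac{\alpha-1}{2}}\,t^{\frac{\beta-1}{2}}\,dt$ --- this is where $\alpha_0=\frac{\alpha+\beta}{2}$ enters --- the equality case being $\alpha=\beta$ or $x=y$. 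A tangent vector $V=PV'P^\top$ commutes with $\Sigma=PDP^\top$ precisely when $V'_{ij}=0$ for all $i,j$ with $d_i\neq d_j$, and the two coefficients agree when $d_i=d_j$, so the two quadratic forms indeed coincide on every commuting pair.

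Next I would write down the candidate formulae and verify them. For commuting $\Sigma,V$ one has $d_\Sigma\pow_{\alpha_0}(V)=\alpha_0\,\Sigma^{\alpha_0-1}V$, so the $\PE(\alpha_0)$-geodesic is $\gamma^*(t)=(\Sigma^{\alpha_0}+t\,\alpha_0\,\Sigma^{\alpha_0-1}V)^{1/\alpha_0}$, equivalently $\gamma^*(t)=((1-t)\Sigma^{\alpha_0}+t\,\Lambda^{\alpha_0})^{1/\alpha_0}$ when it reaches a commuting $\Lambda$ at $t=1$ (well defined and SPD for $t\in[0,1]$ since $\SPD(n)$ is convex); its initial velocity is $\tfrac{1}{\alpha_0}\Sigma^{1-\alpha_0}(\Lambda^{\alpha_0}-\Sigma^{\alpha_0})$; and applying the principle of deformed metrics (Table \ref{tab:riemannian_operations_deformed}) to $\pow_{\alpha_0}$ and the Euclidean metric on the convex cone $\SPD(n)$, the $\PE(\alpha_0)$-distance between \emph{any} two points is $\tfrac{1}{|\alpha_0|}\|\Sigma^{\alpha_0}-\Lambda^{\alpha_0}\|_\Frob$ --- these are exactly the entries of Table \ref{tab:riemannian_operations_mpe}. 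Now along $\gamma^*$ all the matrices $\gamma^*(t)$ and $\dot\gamma^*(t)$ commute with $\Sigma$, hence with each other, so the equality case of the first step yields $g^{\MPE(\alpha,\beta)}_{\gamma^*(t)}(\dot\gamma^*,\dot\gamma^*)=g^{\PE(\alpha_0)}_{\gamma^*(t)}(\dot\gamma^*,\dot\gamma^*)$; integrating gives $L^{\MPE}(\gamma^*)=L^{\PE(\alpha_0)}(\gamma^*)=\tfrac{1}{|\alpha_0|}\|\Sigma^{\alpha_0}-\Lambda^{\alpha_0}\|_\Frob$, and in particular $\gamma^*$ has constant $g^{\MPE(\alpha,\beta)}$-speed. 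For any other path $\gamma$ from $\Sigma$ to $\Lambda$, the first step gives $L^{\MPE}(\gamma)\geq L^{\PE(\alpha_0)}(\gamma)\geq d^{\PE(\alpha_0)}(\Sigma,\Lambda)=L^{\MPE}(\gamma^*)$. Hence $\gamma^*$ is a constant-speed length-minimising curve for $g^{\MPE(\alpha,\beta)}$, therefore a $g^{\MPE(\alpha,\beta)}$-geodesic; consequently $d^{\MPE(\alpha,\beta)}(\Sigma,\Lambda)=\tfrac{1}{|\alpha_0|}\|\Sigma^{\alpha_0}-\Lambda^{\alpha_0}\|_\Frob$ and $\Log^{\MPE(\alpha,\beta)}_\Sigma(\Lambda)=\dot\gamma^*(0)=\tfrac{1}{\alpha_0}\Sigma^{1-\alpha_0}(\Lambda^{\alpha_0}-\Sigma^{\alpha_0})$.

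The hard part is the first step: recognising that $\PE(\alpha_0)$ with $\alpha_0=\tfrac{\alpha+\beta}{2}$ is the correct comparison metric and establishing the global domination $g^{\MPE(\alpha,\beta)}\geq g^{\PE(\alpha_0)}$; the integral representation $q_{\alpha_0}=\int t^{(\alpha-1)/2}t^{(\beta-1)/2}$ and the ensuing Cauchy--Schwarz estimate are the crux, and they also work uniformly when $\alpha$ or $\beta$ is $0$. Everything else is routine: divided-difference bookkeeping, the elementary identity $d_\Sigma\pow_{\alpha_0}(V)=\alpha_0\Sigma^{\alpha_0-1}V$ for commuting arguments, and the known operations of the power-Euclidean metrics. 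Note finally that the inequality of the first step is \emph{strict} off the commuting locus as soon as $\alpha\neq\beta$, which is precisely why these formulae cannot extend to non-commuting matrices.
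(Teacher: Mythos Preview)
Your argument is correct, and it is genuinely different from the paper's. The paper proceeds by direct verification of the geodesic ODE: it uses the dually-flat structure to write $\nabla^{\MPE(\alpha,\beta)}=\tfrac{1}{2}(\pow_\alpha^*\nabla^{\mathrm{E}}+\pow_\beta^*\nabla^{\mathrm{E}})$, then computes $(\gamma^\alpha)''$ and $(\gamma^\beta)''$ (or $(\log\gamma)''$ when $\beta=0$) along the candidate curve and observes that the two contributions cancel by a skew-symmetry in $(\alpha,\beta)$. You instead run a variational comparison: the integral representation $q_{\alpha_0}=\int t^{(\alpha-1)/2}t^{(\beta-1)/2}$ and Cauchy--Schwarz give the global pointwise domination $g^{\MPE(\alpha,\beta)}\geq g^{\PE(\alpha_0)}$ with equality exactly on commuting pairs, so the $\PE(\alpha_0)$-geodesic (which lives in that locus) is automatically length-minimising, hence a geodesic, for $g^{\MPE(\alpha,\beta)}$.

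What each approach buys: the paper's computation makes transparent the role of the balanced connection and reduces everything to a clean algebraic cancellation; your argument is more conceptual, explains \emph{why} $\alpha_0=\tfrac{\alpha+\beta}{2}$ is the natural exponent, and yields the bonus global inequality $d^{\MPE(\alpha,\beta)}\geq d^{\PE(\alpha_0)}$ on all of $\SPD(n)$ (not just commuting pairs). Your treatment also handles the case $\beta=0$ uniformly through $q_0=\log x-\log y$, whereas the paper splits into cases. One small caveat: your final sentence slightly overstates matters --- strictness of the inequality off the commuting locus shows the \emph{comparison argument} fails there, not by itself that the formulae fail; but the theorem only asserts the latter informally, so this is harmless.
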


\vspace{-3mm}
\begin{table}[htbp]
\centering
\begin{tabular}{|c|c|}
    \hline
    Geodesics & $\gamma_{(\Sigma,V)}(t)=(\Sigma^{\alpha_0}+t\,d_\Sigma\pow_{\alpha_0}(V))^{1/\alpha_0}$\\
    \hline
    Logarithm & $\Log_\Sigma(\Lambda)=(d_\Sigma\pow_{\alpha_0})^{-1}(\Lambda^{\alpha_0}-\Sigma^{\alpha_0})$\\
    \hline
    Distance & $d(\Sigma,\Lambda)=\frac{1}{\alpha_0}\|\Lambda^{\alpha_0}-\Sigma^{\alpha_0}\|_\Frob$\\
    \hline
\end{tabular}
\caption{Riemannian operations of Mixed-Power-Euclidean metrics for commuting matrices}
\label{tab:riemannian_operations_mpe}
\end{table}

It would be tempting to generalize the formulae of geodesics, logarithm and distance between commuting matrices to Mixed-Euclidean metrics. However, if we consider two diffeomorphisms $u,v:(0,\infty)\lto(0,\infty)$, the map $f=\sqrt{uv}$ (which generalizes $\pow_{\alpha_0}=\sqrt{\pow_\alpha\pow_\beta}$) is not a diffeomorphism of $(0,\infty)$ in general. For example, take $u(x)=x(x+1)$ and $v(x)=\frac{1}{x^2}$. So the generalization is not straightforward.

\section{Conclusion}

Deforming a Riemannian metric is a general way of defining new metrics and new families of metrics on SPD matrices. In particular, using power diffeomorphisms defines one-parameter families which tend to the log-Euclidean metric when the power tends to 0. The class of kernel metrics is stable by univariate diffeomorphism whereas the class of mean kernel metrics is not. We showed that the alpha-Procrustes (or power-Wasserstein) metrics are mean kernel metrics when the power $p=2\alpha\ls 1$.

We extended the principle of balanced bilinear forms and we gave a new sufficient condition under which the bilinear form is a metric. This allowed to define the new family of Mixed-Euclidean metrics which extends the two-parameter family of Mixed-Power-Euclidean metrics. Since balanced metrics define dually-flat manifolds which are characterized by a canonical divergence, Mixed-Euclidean metrics are in one-to-one correspondence with the $(u,v)$-divergences of information geometry. In particular, Mixed-Power-Euclidean metrics are in bijection with $(\alpha,\beta)$-divergences. Finally, we computed the curvature of all Mixed-Euclidean metrics.

Some questions remain open. What are the conditions on the univariate diffeomorphisms $u,v$ for the $u$-deformed Euclidean, affine, Wasserstein or the $(u,v)$-mixed-Euclidean metric to be a mean kernel metric? Are there more general conditions on two flat metrics for their balanced bilinear form to be a metric? What if we replace the Frobenius inner product by another one? Does the operation $(\phi,\phi')\lmto\sqrt{\phi\phi'}$ on flat kernel metrics generalize the principle of balanced metrics?

More generally, since the two-parameter family of $(\alpha,\beta)$-Mixed-Power-Euclidean metrics interpolate between the Euclidean, the log-Euclidean, the affine-invariant and the Bogoliubov-Kubo-Mori metrics, does there exist a principled family with three or four parameters which additionally includes the Bures-Wasserstein metric? It is not difficult to build parametric families of metrics which interpolate between all of them whereas it is more difficult to find interpolations with an interesting geometry such as the dually-flat geometry.

\section*{Acknowledgements}
This project has received funding from the European Research Council (ERC) under the European Union’s Horizon 2020 research and innovation program (grant G-Statistics agreement No 786854). This work has been supported by the French government, through the UCAJEDI Investments in the Future project managed by the National Research Agency (ANR) with the reference number ANR-15-IDEX-01 and through the 3IA Côte d’Azur Investments in the Future project managed by the National Research Agency (ANR) with the reference number ANR-19-P3IA-0002. The authors warmly thank Frank Nielsen for insightful discussions on $(u,v)$-divergences and Nicolas Guigui and Dimbihery Rabenoro for their careful proofreading of this manuscript.

\begin{appendices}
\section{Proofs}\label{app}

\begin{proof}[Proof of Theorem \ref{thm:power-wasserstein} (Condition for power-Wasserstein to be a mean kernel metric)]

Let us show that if $p\ls 1$, then the function $m(x,y)=\left[(x^p+y^p)\left(p\frac{x-y}{x^p-y^p}\right)^2\right]^{\frac{1}{2-p}}$ is non-decreasing in $x$ (and by symmetry in $y$). If we factorize by $y$ and define a new variable $t=x/y$, we have to study the function $f(t)=F(t)^{\frac{1}{2-p}}$ where $F(t)=(t^p+1)\left(p\frac{t-1}{t^p-1}\right)^2$.

First, let us prove that $F$ is non-decreasing if and only if $p\ls 1$. We denote $f_0(x)=p\frac{x-1}{x^p-1}>0$ and $g_0(x)=x^pf_0(x)^2$ so that $F(x)=f_0(x)^2+g_0(x)$. Note that $f_0$ is non-decreasing if and only if $p\ls 1$. We also introduce $h_0(x)=(x^p+1)(x-1)^2$ so that $F(x)=p^2\frac{h_0(x)}{(x^p-1)^2}$.
\begin{enumerate}
    \item If $p-1>0$, then $F'(0)=-2p^2<0$ so $F'$ cannot be positive around $0$ because it is smooth at $0$. So $F$ is not non-decreasing.
    \item If $p\in]0,1]$, then $F$ is non-decreasing as product of three non-decreasing positive functions.
    \item We assume that $p<0$. Let us prove separately that $F$ is increasing on $(0,1)$ and on $(1,\infty)$. As $F$ is continuous (at $1$), it will prove that $F$ is increasing on $(0,\infty)$.
    \begin{enumerate}
        \item Let us prove that $F$ is increasing on $(0,1)$. We only need to prove that $g_0$ is increasing on $(0,1)$. We successively derive:
        \begin{enumerate}[label=$\bullet$]
            \item $g_0'(x)=\underset{<0}{\underbrace{p^2\frac{x-1}{(x^p-1)^3}x^{p-1}}}\underset{g_1(x)}{\underbrace{((2-p)x^{p+1}+px^p-(p+2)x+p)}}$,
            \item $g_1'(x)=(2-p)(p+1)x^p+p^2x^{p-1}-(p+2)$,
            \item $g_1''(x)=px^{p-2}((2-p)(p+1)x+p(p-1))$.
        \end{enumerate}
        We can notice that $g_1(1)=0$ and $g_1'(1)=0$. So if we prove that $g_1''<0$ on $(0,1)$, then $g_1'$ is decreasing thus positive, so $g_1$ is increasing thus negative, and finally $g_0'$ is positive so $g_0$ increases.
        \begin{enumerate}
            \item If $p+1=0$, then $g_1''$ has the sign of $p^2(p-1)<0$,
            \item if $p+1>0$, then $g_1''$ is positive before $x_0:=\frac{p(p-1)}{(p+1)(p-2)}<0$ and negative after,
            \item if $p+1<0$, then $g_1''$ is negative before $x_0:=\frac{p(p-1)}{(p+1)(p-2)}>1$ and positive after.
        \end{enumerate}
        So we proved that $F$ is increasing on $(0,1)$.
        
        \item Let us prove that $F$ is increasing on $(1,\infty)$. As $x\lmto\frac{1}{(x^p-1)^2}$ is increasing, we only need to prove that $h_0$ is increasing on $(1,\infty)$ so that $F$ is increasing on $(1,\infty)$ as product of two positive increasing functions. We derive successively:
        \begin{enumerate}[label=$\bullet$]
            \item $h_0'(x)=(x-1)h_1(x)$ with $h_1(x)=(p+2)x^p-px^{p-1}+2$,
            \item $h_1'(x)=px^{p-2}((p+2)x-(p-1))$.
        \end{enumerate}
        We need to prove that $h_1>0$ on $(1,\infty)$. As before, we distinguish the cases:
        \begin{enumerate}
            \item If $p+2<0$, then $h_1'$ is negative before $x_0:=\frac{p-1}{p+2}>1$ and positive after. As $h_1(x_0)=2-x_0^{p-1}>1$, we have $h_1>0$ on $(1,\infty)$.
            \item If $p+2\gs 0$, then $h_1'$ is negative on $(1,\infty)$. As $\lim_{x\to\infty}{h_1(x)}=2$, we have $h_1>0$ on $(1,\infty)$.
        \end{enumerate}
        So we proved that $F$ is increasing on $(1,\infty)$ and therefore on $(0,\infty)$.
    \end{enumerate}
\end{enumerate}
Finally, we proved that $F$ is non-decreasing if and only if $p\ls 1$. As $f=\pow_{\frac{1}{2-p}}\circ F$, we can assert that if $p\ls 1$, then $f$ is non-decreasing, as expected.
\end{proof}

\begin{proof}[Proof of Theorem \ref{thm:suff_cond_balanced_metric_frob} (Sufficient condition for a balanced bilinear form to be a metric)]
Let $u,v:\SPD(n)\lto\Sym(n)$ be two univariate diffeomorphisms onto the respective image of $\SPD(n)$ by $u$ and $v$. Let $g^+,g^-$ be the respective deformed-Euclidean metrics by $u$ and $v$. Hence for all $\Sigma\in\SPD(n)$ and $X,Y\in T_\Sigma\SPD(n)$:
\begin{align}
    g^+_\Sigma(X,X)&=\tr(d_\Sigma u(X)^2),\\
    g^-_\Sigma(Y,Y)&=\tr(d_\Sigma v(Y)^2).
\end{align}
Hence the flat parallel transports $\Pi^+$ and $\Pi^-$ do not depend on the curve, they are simply given by the differentials of $u$ and $v$:
\begin{align}
    \Pi^+_{\Sigma\to\Lambda}X=(d_\Lambda u)^{-1}(d_\Sigma u(X)),\\
    \Pi^-_{\Sigma\to\Lambda}Y=(d_\Lambda v)^{-1}(d_\Sigma v(Y)),
\end{align}
where $d_\Sigma u(X)=P\,d_D u(P^\top XP)P^\top$ and $[d_Du(P^\top XP)]_{ij}=u^{[1]}(d_i,d_j)[P^\top XP]_{ij}$ given an eigenvalue decomposition of $\Sigma=PDP^\top$ with $D=\diag(d_1,...,d_n)\in\Diag^+(n)$ and $P\in\Orth(n)$. Note that $d_{I_n}u(X)=u'(1)X$ so $d_{I_n}u=u'(1)\Id$. The same is valid for $v$. Therefore, since the parallel transport is $\Orth(n)$-equivariant, the balanced bilinear form $g^0$ is defined by:
\begin{align}
    g^0_\Sigma(X,Y)&=\tr((\Pi^+_{\Sigma\to I_n}X)(\Pi^-_{\Sigma\to I_n}Y))\\
    &=\tr(P(\Pi^+_{D\to I_n}P^\top XP)P^\top P(\Pi^-_{D\to I_n}P^\top YP)P^\top)\\
    &=\frac{1}{u'(1)v'(1)}\tr(d_Du(P^\top XP)d_Dv(P^\top YP))\\
    &=\frac{1}{u'(1)v'(1)}\sum_{i,j}{u^{[1]}(d_i,d_j)v^{[1]}(d_i,d_j)[P^\top XP]_{ij}[P^\top YP]_{ij}}.
\end{align}
First, $g^0$ is symmetric. Second, since $u:(0,\infty)\lto\R$ is a diffeomorphism, either $u'>0$ or $u'<0$ and by the mean value theorem, the sign of $u^{[1]}$ is the sign of $u'$. Hence $\frac{u^{[1]}(d_i,d_j)}{u'(1)}>0$ and similarly $\frac{v^{[1]}(d_i,d_j)}{v'(1)}>0$ so the coefficients of the quadratic form $g^0_\Sigma(X,X)$ are positive. So the balanced bilinear form $g^0$ is a Riemannian metric.
\end{proof}

\begin{proof}[Proof of Theorem \ref{thm:curvature_mixed-euclidean} (Curvature of Mixed-Euclidean metrics)]

We compute the curvature of the metric $g^{\ME(u,v)}$ for univariate diffeomorphisms $u,v:\SPD(n)\lto\SPD(n)$. Since $d_\Sigma v(X)=d_{u(\Sigma)}w(d_\Sigma u(X))$ with $w=v\circ u^{-1}$, the map $u$ is an isometry between $g^{\ME(u,v)}$ and $c\,g^{\ME(\Id,w)}$ with $c=\frac{w'(1)}{u'(1)v'(1)}$. So it suffices to compute the curvature of $g^{\ME(\Id,w)}$ and to conclude by pullback and scaling.

Let $\Sigma=PDP^\top\in\SPD(n)$. We denote $u_{ij}=u^{[1]}(d_i,d_j)$, $u_{ijk}=u^{[2]}(d_i,d_j,d_k)$ and analogously for $v$ and $w$.

The curvature of $g:=g^{\ME(\Id,w)}$ can be computed the same way as shown in \cite{Michor00} for the metric $\BKM=\MPE(1,0)=\ME(\Id,\log)$. Following \cite{Michor00}, we introduce $G_\Sigma(X)=d_\Sigma w(X)$ and $\Gamma_\Sigma(X,Y)$ such that:
\begin{align}
    g_\Sigma(X,Y) &=\frac{1}{w'(1)}\tr(d_\Sigma w(X)Y)=\frac{1}{w'(1)}\tr(G_\Sigma(X)Y),\\
    \nabla_{X_\Sigma}Y &= d_\Sigma Y(X)+\Gamma_\Sigma(X,Y),
\end{align}
where $\nabla$ is the Levi-Civita connection of $g$. Note that $G_\Sigma:\Sym(n)\lto\Sym(n)$ is a linear isomorphism and $\Gamma_\Sigma$ is symmetric. According to Lemma \ref{lemma_diff_univariate}, $[G_D(X)]_{ij}=w_{ij}X_{ij}$. Then the Riemann curvature tensors are defined by $R(X,Y)Z=\nabla_X\nabla_YZ-\nabla_Y\nabla_XZ-\nabla_{[X,Y]}Z$ and $R(X,Y,Z,T)=-g(R(X,Y)Z,T)$ so we can write $R$ in function of $\Gamma$ and $d\Gamma$ or in function of $G$ and $dG$ \cite{Michor00}:
\begin{align}
    R(X,Y)Z&=d\Gamma(X)(Y,Z)-d\Gamma(Y)(X,Z)+\Gamma(X,\Gamma(Y,Z))-\Gamma(Y,\Gamma(X,Z))\\
    &=-\frac{1}{4}G^{-1}(dG(X)(G^{-1}(dG(Y)(Z))))+\frac{1}{4}G^{-1}(dG(Y)(G^{-1}(dG(X)(Z)))),\\
    R(X,Y,Z,T)&=\frac{1}{4w'(1)}\tr\left[\left(dG(X)(G^{-1}(dG(Y)(Z)))-dG(Y)(G^{-1}(dG(X)(Z)))\right)T\right].
\end{align}

So we only need to express $d_\Sigma G(X)(Y)=H_\Sigma w(X,Y)$. Lemma \ref{lemma_diff_univariate} gives $H_\Sigma w(X,Y)=P\,H_Dw(X',Y')P^\top$ and $[H_D(X',Y')]_{ij}=\sum_kw_{ijk}(X'_{ik}Y'_{jk}+X'_{jk}Y'_{ik})$. Hence:
\small
\begin{align}
    \tr(d_\Sigma G(X)(G_\Sigma^{-1}(d_\Sigma G(Y)(Z)))T)&=\tr(d_DG(X')(G_D^{-1}(d_DG(Y')(Z')))T')\\
    &=\sum_{i,j}[d_DG(X')(G_D^{-1}(d_DG(Y')(Z')))]_{ij}T'_{ij}\\
    &=\sum_{i,j,k}w_{ijk}(X'_{ik}[G_D^{-1}(d_DG(Y')(Z'))]_{jk}+X'_{jk}[G_D^{-1}(d_DG(Y')(Z'))]_{ik})T'_{ij}\\
    &=2\sum_{i,j,k}w_{ijk}X'_{ik}[G_D^{-1}(d_DG(Y')(Z'))]_{jk}T'_{ij}\\
    &=2\sum_{i,j,k}\frac{w_{ijk}}{w_{jk}}X'_{ik}[d_DG(Y')(Z')]_{jk}T'_{ij}\\
    &=2\sum_{i,j,k,l}\frac{w_{ijk}w_{jkl}}{w_{jk}}X'_{ik}(Y'_{jl}Z'_{kl}+Y'_{kl}Z'_{jl})T'_{ij}.
\end{align}
\normalsize
Therefore:
\begin{align}
    R(X,Y,Z,T)&=\frac{1}{w'(1)}\sum_{i,j,k,l}\frac{w_{ijk}w_{jkl}}{2w_{jk}}(X'_{ik}Y'_{kl}Z'_{lj}T'_{ji}+X'_{ik}Z'_{kl}Y'_{lj}T'_{ji}\\
    &\quad\quad\quad\quad\quad\quad\quad\quad\quad\quad -Y'_{ik}X'_{kl}Z'_{lj}T'_{ji}-Y'_{ik}Z'_{kl}X'_{lj}T'_{ji}).\nonumber
\end{align}

To get the curvature of $g^{\ME(u,v)}$, we scale this formula by $c=\frac{w'(1)}{u'(1)v'(1)}$ and we pull it back via the map $u$. The coefficients $w_{ij}=w^{[1]}(d_i,d_j)$ and $w_{ijk}=w^{[2]}(d_i,d_j,d_k)$ are replaced by the coefficients $m_{ij}=w^{[1]}(u(d_i),u(d_j))=v_{ij}/u_{ij}$ and $m_{ijk}=w^{[2]}(u(d_i),u(d_j),u(d_k))$. The vectors $X=[X_{ij}]_{i,j}$ are replaced by the vectors $d_\Sigma u(X)=[u_{ij}X_{ij}]_{i,j}$. Hence the curvature of $\ME(u,v)$ writes (modulo a permutation of indexes $l\to k\to j\to l$):
\begin{align}
    R^{\ME(u,v)}_\Sigma(X,Y,Z,T)&=\frac{1}{u'(1)v'(1)}\sum_{i,j,k,l}\rho_{ijkl}(X'_{ij}Y'_{jk}Z'_{kl}T'_{li}+X'_{ij}Z'_{jk}Y'_{kl}T'_{li} \label{eq:proof_curvature}\\
    &\quad\quad\quad\quad\quad\quad\quad\quad\quad\quad-Y'_{ij}X'_{jk}Z'_{kl}T'_{li}-Y'_{ij}Z'_{jk}X'_{kl}T'_{li}),\nonumber
\end{align}
where $\rho_{ijkl}=\frac{m_{ijl}m_{jlk}}{2m_{jl}}u_{ij}u_{jk}u_{kl}u_{li}$. This expression is symmetric in $i\leftrightarrow k$ and $j\leftrightarrow l$ but it does not look really symmetric in $u\leftrightarrow v$. Let us check that it is though. If $d_j\ne d_l$, we can write:
\begin{align*}
    \rho_{ijkl}&= \frac{1}{2}\frac{u_{jl}}{v_{jl}}\frac{1}{u(d_j)-u(d_l)}(m_{ij}-m_{il})\frac{1}{u(d_j)-u(d_l)}(m_{kj}-m_{kl})u_{ij}u_{jk}u_{kl}u_{li}\\
    &= \frac{1}{2(d_j-d_l)^2}\frac{1}{u_{jl}v_{jl}}(v_{ij}u_{il}-u_{ij}v_{il})(v_{kj}u_{kl}-u_{kj}v_{kl})\\
    &= \frac{(v_{ij}(u_{il}-u_{ij})-u_{ij}(v_{il}-v_{ij}))(v_{kj}(u_{kl}-u_{kj})-u_{kj}(v_{kl}-v_{kj}))}{2(d_j-d_l)^2u_{jl}v_{jl}}\\
    &= \frac{1}{2u_{jl}v_{jl}}(u_{ij}v_{ijl}-v_{ij}u_{ijl})(u_{jk}v_{jkl}-v_{jk}u_{jkl}).
\end{align*}
Since the two expressions of $\rho_{ijkl}$ are continuous in $(d_i,d_j,d_k,d_l)$, they also coincide when $d_j=d_l$. The last expression is clearly symmetric in $u\leftrightarrow v$.

The curvature at $\Sigma=D=I_n$ follows from the following computations:
\begin{align}
    u_{ij}&=u^{[1]}(d_i,d_j)=u'(1),\\
    m_{ij}&=\frac{v_{ij}}{u_{ij}}=\frac{v'(1)}{u'(1)},\\
    m_{ijk}&=w^{[2]}(u(d_i),u(d_j),u(d_k))=\frac{1}{2}w''(u(1)),\\
    w'&=(v'\circ u^{-1})\times(u^{-1})'=\frac{v\circ u^{-1}}{u'\circ u^{-1}},\\
    w'\circ u&=\frac{v'}{u'},\\
    (w''\circ u)\times u'&=\frac{v''u'-v'u''}{{u'}^2},\\
    m_{ijk}&=\left(\frac{v''u'-u'v''}{2{u'}^3}\right)(1),\\
    \rho_{ijkl}&=\frac{m_{ijl}m_{jlk}}{2m_{jl}}u_{ij}u_{jk}u_{kl}u_{li}\\
    &=\frac{u'(1)}{2v'(1)}\left(\frac{v''(1)u'(1)-u'(1)v''(1)}{2{u'(1)}^3}\right)^2{u'(1)}^4\\
    &=\frac{1}{8u'(1)v'(1)}(v''(1)u'(1)-v'(1)u''(1))^2\\
    &=\frac{1}{8u'(1)v'(1)}u'(1)^2v'(1)^2\left(\frac{v''(1)}{v'(1)}-\frac{u''(1)}{u'(1)}\right)^2\\
    &=\frac{1}{8}u'(1)v'(1)\left((\ln|v'|)'-(\ln|u'|)'\right)^2(1)\\
    &=\frac{1}{8}u'(1)v'(1)\left[\left(\ln\left|\frac{v'}{u'}\right|\right)'(1)\right]^2.
\end{align}
Hence, according to Formula (\ref{eq:proof_curvature}), the curvature at $I_n$ writes:
\small
\begin{align}
    R^{\ME(u,v)}_{I_n}(X,Y,Z,T)&=\frac{1}{8}\left[\left(\ln\left|\frac{v'}{u'}\right|\right)'(1)\right]^2\,\tr(XYZT+XZYT-YXZT-YZXT)\\
    &=\frac{1}{8}\left[\left(\ln\left|\frac{v'}{u'}\right|\right)'(1)\right]^2\,\tr(XYZT-YXZT),
\end{align}
\normalsize
because the second and fourth terms cancel. Recognizing the curvature of the affine-invariant $R_{I_n}^{\mathrm{A}}(X,Y,Z,T)=\frac{1}{2}\tr(XYZT-YXZT)$ metric \cite{Skovgaard84,Pennec20,Thanwerdas21-LAA}, we can finally write:
\begin{equation}
    R^{\ME(u,v)}_{I_n}(X,Y,Z,T)=\frac{1}{4}\left[\left(\ln\left|\frac{v'}{u'}\right|\right)'(1)\right]^2R_{I_n}^{\mathrm{A}}(X,Y,Z,T).
\end{equation}
\end{proof}

\begin{proof}[Proof of Theorem \ref{thm:riemannian_operations_of_mpe_metrics} (Riemannian operations of Mixed-Power-Euclidean metrics)]
We compute the geodesics, the logarithm map and the distance between commuting matrices. We show that the geodesics of the Mixed-Power-Euclidean metrics $\MPE(\alpha,\beta)$ with $\alpha+\beta\ne 0$ when the base point $\Sigma\in\SPD(n)$ and the initial tangent vector $X\in T_\Sigma\SPD(n)$ commute is $\gamma(t)=(\Sigma^{\alpha_0}+t\,\alpha_0\,\Sigma^{\alpha_0-1}X)^{1/p_0}$ where $\alpha_0=\frac{\alpha+\beta}{2}\ne 0$. Once this is shown, the formulae of the logarithm and the distance are obvious so we omit the proofs. As the metric is $\Orth(n)$-invariant, we can assume that $\Sigma$ and $X$ are diagonal matrices.

First, we assume that $\alpha,\beta\ne 0$. As $\MPE(\alpha,\beta)$ is a balanced metric, the Levi-Civita connection is $\nabla^{\MPE(\alpha,\beta)}=\frac{1}{2}(\pow_\alpha^*\nabla^{\mathrm{E}}+\pow_\beta^*\nabla^{\mathrm{E}})$ where $\nabla^{\mathrm{E}}$ is the Euclidean connection on symmetric matrices. Since for any curve $\gamma$ on $\SPD(n)$, we have:
\begin{align}
    (\pow_\alpha^*\nabla^{\mathrm{E}})_{\gamma'(t)}\gamma'&=(d_{\gamma(t)}\pow_\alpha)^{-1}(\nabla^{\mathrm{E}}_{(d_{\gamma(t)}\pow_\alpha)(\gamma'(t))}d\pow_\alpha(\gamma'))\\
    &=(d_{\gamma(t)}\pow_\alpha)^{-1}(\nabla^{\mathrm{E}}_{(\gamma^\alpha)'(t)}(\gamma^\alpha)')\\
    &=(d_{\gamma(t)}\pow_\alpha)^{-1}((\gamma^\alpha)''(t)),
\end{align}
the geodesic equation $\nabla^{\MPE(\alpha,\beta)}_{\gamma'}\gamma'=0$ rewrites:
\begin{equation}
    (d_{\gamma(t)}\pow_\alpha)^{-1}((\gamma^\alpha)''(t))+(d_{\gamma(t)}\pow_\beta)^{-1}((\gamma^\beta)''(t))=0.
\end{equation}
We compute:
\begin{align}
    \gamma(t)^\alpha&=\Sigma^\alpha(I_n+t\,\alpha_0\,\Sigma^{-1}X)^{\frac{\alpha}{\alpha_0}},\\
    (\gamma^\alpha)'(t)&=\alpha\,\Sigma^{\alpha-1}X(I_n+t\,\alpha_0\,\Sigma^{-1}X)^{\frac{\alpha}{\alpha_0}-1},\\
    (\gamma^\alpha)''(t)&=\alpha(\alpha-\alpha_0)\,\Sigma^{\alpha-2}X^2(I_n+t\,\alpha_0\,\Sigma^{-1}X)^{\frac{\alpha}{\alpha_0}-2},\\
    (d_{\gamma(t)}\pow_\alpha)^{-1}((\gamma^\alpha)''(t))&=\frac{1}{\alpha}\gamma(t)^{1-\alpha}(\gamma^\alpha)''(t)\\
    &=\frac{\alpha-\beta}{2}\Sigma^{-1}X^2(I_n+t\,\alpha_0\,\Sigma^{-1}X)^{\frac{1}{\alpha_0}-2}.\label{eq:alpha_second_derivative}
\end{align}
As this expression is skew-symmetric in $(\alpha,\beta)$, the curve $\gamma$ satisfies the geodesic equation.

Second, we assume that $\alpha\ne 0$ and $\beta=0$. Similarly, the Levi-Civita connection is $\nabla^{\MPE(\alpha,\beta)}=\frac{1}{2}(\pow_\alpha^*\nabla^{\mathrm{E}}+\log^*\nabla^{\mathrm{E}})$. Hence the geodesic equation is analogously $(d_{\gamma(t)}\pow_\alpha)^{-1}((\gamma^\alpha)''(t))+(d_{\gamma(t)}\log)^{-1}((\log\gamma)''(t)=0$. Thus:
\begin{align}
    \log\gamma(t)&=\log\Sigma+\frac{1}{\alpha_0}\log(I_n+t\,\alpha_0\,\Sigma^{-1}X),\\
    (\log\gamma)'(t)&=\Sigma^{-1}X(I_n+t\,\alpha_0\,\Sigma^{-1}X)^{-1},\\
    (\log\gamma)''(t)&=-\alpha_0\,\Sigma^{-2}X^2(I_n+t\,\alpha_0\,\Sigma^{-1}X)^{-2},\\
    (d_{\gamma(t)}\log)^{-1}((\log\gamma)''(t))&=\gamma(t)(\log\gamma)''(t)\\
    &=-\frac{\alpha}{2}\Sigma^{-1}X^2(I_n+t\,\alpha_0\,\Sigma^{-1}X)^{\frac{1}{\alpha_0}-2}.
\end{align}
This expression cancels Equation (\ref{eq:alpha_second_derivative}) with $\beta=0$ so the curve $\gamma$ is the geodesic.
\end{proof}

\end{appendices}

\bibliographystyle{elsarticle-num-names}
\bibliography{biblio}

\end{document}